\documentclass[11pt]{amsart}
 \usepackage{pifont}
\usepackage{latexsym}
\usepackage{amsmath}
\usepackage{amsfonts}
\usepackage{amssymb}
\usepackage[american]{babel}
\usepackage[babel, final]{microtype}
\usepackage{amssymb}
\usepackage[pagewise]{lineno}
\usepackage[pdftex, final]{graphicx}
\usepackage{caption}
\usepackage{subcaption}
\usepackage{pinlabel}
\usepackage{enumerate}
\usepackage{url}

\usepackage[pdftex, dvipsnames]{xcolor}
\usepackage[pdftex,%
  final,%
  colorlinks=true,%
  linkcolor=NavyBlue,%
  citecolor=NavyBlue,%
  filecolor=NavyBlue,%
  menucolor=NavyBlue,%
  urlcolor=NavyBlue,%
  bookmarks=true,%
  bookmarksdepth=3,%
  bookmarksnumbered=true,%
  bookmarksopen=true,%
  bookmarksopenlevel=2,%
]{hyperref}
\input{macros}

\newcommand*{\T}{\mathrm{T}}
\newcommand*{\Li}{\mathrm{L}}
\newcommand*{\tb}{\mathrm{tb}}
\newcommand*{\rot}{\mathrm{rot}}
\newcommand*{\M}{\mathrm{M}}

\newcommand*{\sg}{\mathrm{sg}}

\newcommand*{\B}{\mathrm{B}}
\newcommand*{\R}{\mathbb{R}}
\newcommand*{\xistd}{\xi_{std}}

\newcommand*{\slk}{\mathrm{sl}}

\newcommand*{\D}{\mathrm{D}}

\begin{document}
\title {Transverse links, open books and overtwisted manifolds}
\author[Rima Chatterjee]{Rima Chatterjee}
\address{Mathematisches Institut\\ Universit\"at zu K\" oln\\ Weyertal 86-90, 
  50931 K\"oln, Germany}
\email{\href{mailto:rchatt@math.uni-koeln.de}{rchatt@math.uni-koeln.de}}
\urladdr{\url{http://www.rimachatterjee.com/}}

\begin{abstract}
We prove that transverse links in any contact manifold $(M,\xi)$ can be realized as a sub-binding of a compatible open book decomposition. We define the support genus of a transverse link and prove that the support genus of a transverse knot is zero if there is an overtwisted disk disjoint from it. Next, we find a relationship between the support genus of a transverse link and its Legendrian approximation.
\end{abstract} 
\maketitle
\section{Introduction}
\label{intro}
Knot theory associated to contact 3-manifolds has been a very interesting area of research. We say a knot in a contact 3-manifold is \emph{Legendrian} if it is tangent everywhere to the contact planes and \emph{transverse} if it is everywhere transverse.  Since Eliashberg's classification of overtwisted contact structures \cite{eliashbergovertwist}, the study of overtwisted contact structures and the knots and links in them, has been minimal. However, in recent years overtwisted contact structures have played central roles in many interesting applications such as building achiral Lefschetz fibration \cite{ef}, near symplectic structures on 4-manifolds \cite{gay} and many more. Thus the overtwisted manifolds and the knot theory associated to them has generated significant interest. There are two types of knots/links in an overtwisted contact structure, namely loose and non-loose knots (Also known as non-exceptional and exceptional knots respectively).

A link in an overtwisted contact manifold is loose if its complement is overtwisted and non-loose otherwise. The first explicit example of a non-loose knot is given by Dymara in \cite{dy}. In general, non-loose knots appear to be rare. Although it is known that there exists a non-loose knot in every $3$-manifold (\cite{etnyre-sv},\cite{geiona}) it was not known if every knot type has a non-loose representative until very recently. In \cite{CEMM}, we found conditions when a knot type will have non-loose representatives in any contact 3-manifold. Legendrian approximation and transverse push-off operations are known to be very useful tool in contact geometry to go back and forth between Legendrian and transverse representatives of the same knot type. While it is known that a non-loose transverse knot will always have a non-loose Legendrian approximation, surprisingly, a non-loose Legendrian knot can have a transverse push-off which is loose  \cite{et}.  

In \cite{et}, Etnyre coarsely classified loose, null-homologous Legendrian and transverse knots. In a previous paper, the author extended that result for Legendrian and transverse links \cite{rim1}. The support genus of a Legendrian link is defined to be the minimum genus over all open books compatible with the underlying contact structure such that the Legendrian link can be put on the page and the page framing and the contact framing agree. The author proved that the coarse equivalence class of Legendrian links have support genus zero and constructed examples to show that the converse is not true \cite{rim1}. While support genus of a Legendrian knot was defined in \cite{ona}, nothing was mentioned about transverse knots. This paper investigates transverse knots and links in contact manifolds and associates any transverse link in a contact manifold to a compatible open book depomposition. The following is the main result of the paper:
%We prove that  we can associate any transverse link $\T$ in $(\M,\xi)$ with an open book $(\Sigma,\phi)$ supporting the contact manifold.
\begin{theorem}
\label{thm:sgtransverse}
 Suppose $\T$ is any transverse link in $(\M,\xi)$. Then $\T$ is transversely isotopic to the sub-binding of some open book $(\B,\pi)$ supporting $(M, \xi)$.
\end{theorem}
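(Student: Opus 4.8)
The plan is to reduce the statement to producing a compatible open book of $(\M,\xi)$ a sub-collection of whose binding components is transversely isotopic to $\T$, and then to build such an open book by absorbing $\T$ into the binding through a sequence of \emph{positive} stabilizations. Recall first the standard facts that the binding of any open book supporting $(\M,\xi)$ is a transverse link (each component has a standard neighbourhood $S^1\times D^2$ with the component as the core circle), and that the transverse isotopy type of a binding component depends only on the open book up to isotopy; so the reduction above is legitimate. By Giroux's theorem fix some open book $(\B_0,\pi_0)$ supporting $(\M,\xi)$. By Pavelescu's braiding theorem (the contact generalization of Alexander's theorem) $\T$ is transversely isotopic to a link \emph{braided} about $(\B_0,\pi_0)$: disjoint from $\B_0$ and positively transverse to every page; after this isotopy assume $\T$ has this form, so that $\T$ bounds a Bennequin-type surface $F$ assembled from subsurfaces of the pages and bands at the braid crossings.

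The heart of the argument is then a Giroux--Goodman / Rudolph-type stabilization: plumb finitely many positive Hopf bands onto $F$ --- equivalently, positively stabilize $(\B_0,\pi_0)$ finitely many times --- so that the resulting surface is a \emph{fiber} surface, i.e. the page of an open book $(\B,\pi)$, whose binding $\B$ consists of $\T$ together with some auxiliary (unknotted) binding components. In the model case $\M=S^3$ with the disc open book this is the classical fact that every link is a sublink of a fibered link, realised by plumbing Hopf bands onto a Seifert surface; what is needed here is the contact refinement that, for a transverse braid, the required plumbings may be taken \emph{positive}, so that Giroux's theorem guarantees the stabilized open book still supports $(\M,\xi)$ and still has total space $\M$. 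The general three-manifold case is obtained by running the same argument inside the mapping-torus part of $(\B_0,\pi_0)$. Throughout, $\T$ is moved only by transverse isotopies and the modifications are supported away from $\T$ except along the arcs where $\T$ is pushed onto the new boundary, so the transverse isotopy type of $\T$ is preserved; hence at the end $\T$ is transversely isotopic to a sub-binding of $(\B,\pi)$, as desired. For a multi-component $\T$ one performs all of this component by component inside disjoint neighbourhoods.

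The main obstacle is making this stabilization argument precise in the contact category --- specifically, showing that the plumbings pulling $\T$ into the binding can always be taken positive, since only positive stabilizations preserve the supported contact structure (a negative one would change $\xi$). This forces one to organise the braid carefully: the natural strategy is to isolate the ``negative'' bands of the Bennequin surface $F$ and cancel each of them against a freshly plumbed positive Hopf band whose core becomes one of the auxiliary binding components, after first normalising the braid by positive Markov-type moves, which are transverse isotopies. One then has to verify in a standard neighbourhood $S^1\times D^2$ of a binding circle that the stabilized fibration is again adapted to $\xi$ and that the relevant arc of $\T$ has been made parallel to the new binding circle. (A dual route, running into the same computational core, would take a Legendrian approximation $L$ of $\T$, realise $L$ on a page of a compatible open book with the page framing equal to the contact framing as in \cite{rim1}, and then convert ``$L$ on a page'' into ``the transverse push-off of $L$ in the binding'' by re-fibering a neighbourhood of $L$.)
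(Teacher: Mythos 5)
Your overall reduction (realize $\T$ as a sub-collection of binding components of a compatible open book, obtained from a fixed supporting open book by positive stabilizations only) is the right frame, but the proposal does not actually prove the step on which everything hinges. After braiding $\T$ about $(\B_0,\pi_0)$ via Pavelescu, you assert a ``Giroux--Goodman / Rudolph-type'' statement: that one can plumb finitely many \emph{positive} Hopf bands so that the Bennequin-type surface of the braid becomes a page of an open book supporting $(\M,\xi)$ with $\T$ among the binding components. In the topological category the analogous fact uses Hopf bands of both signs, and the contact refinement you need --- all plumbings positive, the resulting fibration adapted to $\xi$, and $\T$ landing in the binding without changing its transverse type --- is essentially the theorem being proved; it is not a quotable standard result for a general $(\M,\xi)$, and your text does not supply a proof. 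The paragraph beginning ``The main obstacle is making this stabilization argument precise'' concedes exactly this: ``isolate the negative bands and cancel each against a freshly plumbed positive Hopf band'' is a slogan, not an argument --- there is no mechanism given by which a positive plumbing absorbs a negative band of the Bennequin surface while keeping a fiber surface with $\T$ in its boundary, and the verification ``in a standard neighbourhood $S^1\times D^2$'' is left entirely to the reader. As written, the proof has a genuine gap at its central step.

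For comparison, the paper closes this gap by the route you only mention parenthetically at the end: take a Legendrian approximation $\Li$ of $\T$, realize $\Li$ on a page of a supporting open book with page framing equal to contact framing, and then apply the Baker--Etnyre lemma (\fullref{lemma:crucial}), which says that positively stabilizing along a curve running from a binding component along an arc ``to the right'' of a page Legendrian $K$ and back produces a binding component that is the transverse push-off of $K$ (negative stabilizations of $\Li$ introduced along the way are harmless because they do not change the transverse push-off). The real content for \emph{links} is then combinatorial: one must choose the arcs for all components disjointly and ``to the right,'' which forces the case analysis in \fullref{ssec:case1a}--\fullref{ssec:case2b} (planar versus positive genus pages, parallel page-isotopic components, and components with opposite orientations, the last handled by an extra positive stabilization along a boundary-parallel curve and pushing components over the new $1$-handle, with the order of Dehn twists tracked carefully). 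If you want to salvage your braid-theoretic approach, you would need to prove the positive-plumbing claim in the generality of an arbitrary supporting open book of an arbitrary $(\M,\xi)$; otherwise the Legendrian-approximation route is the one that actually goes through.
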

While it is well-known that any  transverse knot can be associated to a compatible open book \cite{bakeretnyre}, it is not so obvious and much more complicated for the links as the proof of the above theorem shows.

 \fullref{thm:sgtransverse} allows us to define the support genus of a transverse link $\T$  following the support genus of a Legendrian link. The support genus of a transverse link in $(\M,\xi)$ is defined to be:\\
$\sg(\T)$=min\{$g\colon$where $g$ is the minimum genus of a page of an open\\ \hspace{5 cm} book  $(\Sigma,\phi)$ supporting $(\M,\xi)$ such that $\T$ can be realized as a sub-binding of $(\Sigma,\phi)$\}.\\

Using the well-known relation of Legendrian and transverse links, we could relate the support genus of a transverse link with the support genus of its Legendrian approximation.
\begin{theorem}
\label{thm:LegTsg}
Suppose $\T$ is a transverse link in $(\M,\xi)$ and $\Li$ is its Legendrian approximation such that its contact framing is less than or equal to the page framing. Then $\sg(\T)=\sg(\Li)$.
\end{theorem}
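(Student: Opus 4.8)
The plan is to prove the two inequalities $\sg(\T)\le\sg(\Li)$ and $\sg(\Li)\le\sg(\T)$ separately, using the positive transverse push-off as the bridge between the Legendrian and transverse pictures and realizing it at the level of open books while keeping the genus of the page under control. The structural point is that all of the modifications involved take place in a neighborhood of $\Li$ (resp.\ $\T$) and therefore do not disturb the rest of the open book, so one only has to audit the genus change caused by a few one-handles attached near the knot.

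For $\sg(\T)\le\sg(\Li)$, take an open book $(\Sigma,\phi)$ of genus $\sg(\Li)$ supporting $(\M,\xi)$ on a page of which $\Li$ lies with page framing equal to its contact framing. I would then run the construction underlying \fullref{thm:sgtransverse}, which turns such a Legendrian-on-a-page into a sub-binding, but arrange it to be genus-preserving: the construction proceeds by positive stabilizations plumbed along arcs contained in a small neighborhood of $\Li$, and near $\Li$ these arcs can be chosen boundary-parallel, so that each added one-handle creates a new boundary component rather than a handle and the genus of the page is left unchanged (while $\chi$ drops, as it must for a stabilization). In the standard local model the binding components produced this way are exactly the positive transverse push-offs of the components of $\Li$, hence together they form $\T$ up to transverse isotopy. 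This yields an open book of genus $\sg(\Li)$ with $\T$ as a sub-binding, so $\sg(\T)\le\sg(\Li)$.

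For $\sg(\Li)\le\sg(\T)$, take an open book $(\Sigma,\phi)$ of genus $\sg(\T)$ with $\T$ a sub-binding and reverse the previous step: near each component $T_i$ of $\T$ perform a genus-preserving positive stabilization (again along a boundary-parallel arc) after which $T_i$ becomes isotopic to a curve $L_i$ lying on a page with page framing equal to its contact framing. Then $\Li_0=\bigcup_i L_i$ is a Legendrian approximation of $\T$ --- its positive transverse push-off is $\T$ --- lying on a page of an open book of genus $\sg(\T)$. It remains to reconcile $\Li_0$ with the prescribed $\Li$: any two Legendrian approximations of $\T$ admit a common negative stabilization, and the hypothesis that the contact framing of $\Li$ is at most the page framing is what lets one conclude that passing between $\Li_0$ and $\Li$ costs only negative stabilizations; since the support genus of a Legendrian link is controlled under negative stabilization --- it does not increase, equivalently a negative stabilization can be re-realized on the same open book by a local modification (see \cite{rim1}, cf.\ \cite{ona}) --- we obtain $\sg(\Li)\le\sg(\Li_0)\le\sg(\T)$. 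Combined with the first inequality this gives $\sg(\T)=\sg(\Li)$.

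The step I expect to be the main obstacle is the genus bookkeeping: one has to verify that every one-handle appearing in these open-book modifications --- both in turning a Legendrian on a page into a binding component and in pushing a binding component back onto a page --- can genuinely be attached so as to spawn a new boundary component rather than raise the genus, and that the binding (resp.\ page curve) one ends up with is transversely (resp.\ Legendrian) isotopic to the expected one. A secondary technical point is pinning down exactly which Legendrian approximation the binding-to-page construction returns; this is where the framing hypothesis enters, and it should be used only to absorb negative stabilizations, on which the support genus is understood.
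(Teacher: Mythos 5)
Your proposal is correct and follows essentially the same route as the paper: the inequality $\sg(\T)\le\sg(\Li)$ is obtained exactly as in the paper by running the algorithm of \fullref{thm:binding} on a minimal-genus open book containing $\Li$ and noting that the stabilizing arcs have both feet on one binding component (so the genus is unchanged), together with invariance of $\sg$ under transverse isotopy (\fullref{thm:invariant}); the inequality $\sg(\Li)\le\sg(\T)$ is the content of the paper's \fullref{lemma:genus}, where binding components are turned into page Legendrians by genus-preserving positive stabilizations and further negative stabilizations are realized on the page, the framing hypothesis playing the same role in both arguments.
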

The above theorem turns out to be quite surprising as one might hope a transverse link to have a support genus greater than its Legendrian approximation.

Onaran proved the following:
\begin{theorem}[\cite{ona}]
Suppose $\Li$ is a loose, null-homologous Legendrian knot. Then $\sg(\Li)=0$.
\end{theorem}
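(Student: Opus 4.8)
The plan is to pass to the coarse classification and then build one good representative by hand. By Etnyre's coarse classification of loose null-homologous Legendrian knots \cite{et}, $\Li$ is determined up to Legendrian isotopy by its knot type $K$, its Thurston--Bennequin invariant, its rotation number, and the ambient contact structure $\xi$; and $\xi$, being overtwisted, is in turn determined by the homotopy class of the underlying plane field \cite{eliashbergovertwist}. Consequently $\sg(\Li)$ depends only on these data, and it is enough to produce a \emph{single} loose Legendrian knot in $(\M,\xi)$ of type $K$ with the same $\tb$ and $\rot$ which lies on a page of a compatible \emph{planar} open book, with page framing equal to contact framing.

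To build such a representative I would use two facts. First, since $(\M,\xi)$ is overtwisted it is supported by a planar open book $(\Sigma_1,\phi_1)$ --- overtwisted contact structures have support genus zero. Second, fix an overtwisted ball $U\subset\M$, disjoint from the binding. I would then positively stabilize $(\Sigma_1,\phi_1)$ repeatedly, performing every stabilization inside $\M\setminus U$ and attaching each stabilization band with both feet on a single boundary component, so that planarity --- and the supported contact structure --- is preserved, until some page carries an embedded curve $c$ isotopic to $K$ with $c\subset\M\setminus U$ and with page framing equal to the prescribed $\tb$ (if the prescribed $\tb$ is too large for the planar open book at hand, restart from a more complicated planar open book for $(\M,\xi)$). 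By convex surface theory $c$ may be Legendrian-realized on its page with contact framing equal to page framing, giving a Legendrian $\Li'$ of type $K$ with the prescribed $\tb$; adjusting the realization (or inserting a canceling pair of Legendrian stabilizations) makes $\rot(\Li')$ the prescribed value as well --- this flexibility is exactly what looseness provides. Because $\Li'\subset\M\setminus U$, the complement $\M\setminus\nu(\Li')$ contains $U$ and hence is overtwisted, so $\Li'$ is loose; by the classification $\Li'$ is Legendrian isotopic to $\Li$, and the stabilized planar open book exhibits $\sg(\Li)=\sg(\Li')=0$.

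The main obstacle is the input that every overtwisted contact structure on a closed oriented $3$--manifold is supported by a planar open book (equivalently, that overtwisted contact structures have zero support genus) together with the compatible bookkeeping: realizing the prescribed knot type on a planar page, by planarity-preserving positive stabilizations performed off a fixed overtwisted ball, so that the resulting Legendrian knot carries the prescribed classical invariants and is still loose. Looseness is indispensable here: there is no Bennequin-type constraint on the framing, and confining the construction to the complement of an overtwisted ball is what certifies looseness of the representative, whereas for a non-loose knot no such region is available and $\sg$ can be positive. One could instead argue directly, without invoking the classification: start from any compatible open book carrying $\Li$ on a page (these exist, with page framing equal to contact framing) and use an overtwisted disk in $\M\setminus\nu(\Li)$ to trade the page's handles, one at a time, against positive stabilizations and handle slides supported in the overtwisted region, reducing the page to genus zero without disturbing $\Li$; the technical heart of this route --- carrying out the reduction relative to the knot --- is the same difficulty in another guise.
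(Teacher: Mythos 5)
Note first that the paper does not actually prove this statement: it is quoted from Onaran \cite{ona} (the link version being the author's earlier work \cite{rim1}), so there is no in-paper proof to compare against, and your attempt must stand on its own. Your high-level strategy --- reduce via the classification of loose null-homologous Legendrian knots \cite{et} to exhibiting a single loose representative of the given knot type with the prescribed $\tb$ and $\rot$ on a planar open book supporting $(\M,\xi)$, certifying looseness by keeping an overtwisted ball disjoint from the construction --- is the standard route and parallels \cite{rim1}. One correction already at the level of the reduction: Etnyre's classification is \emph{coarse}, i.e.\ up to contactomorphism of $(\M,\xi)$, not up to Legendrian isotopy as you state. This still suffices, but only after you observe that the support genus is a contactomorphism invariant (a contactomorphism carries supporting open books to supporting open books, pages to pages, and framings to framings).

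The genuine gap is the construction itself, which you assert rather than prove, and which is essentially the content of the theorem. (i) The claim that the given knot type can be captured on a page by planarity-preserving positive stabilizations of a planar open book supporting $\xi$, performed away from a fixed overtwisted ball and with prescribed page framing, is not a quotable black box; the analogous claim fails in general (that is exactly why $\sg$ can be positive for non-loose knots), so any proof must use looseness in the construction, whereas your sketch uses overtwistedness only to keep $U$ disjoint. (ii) Loose knots satisfy no Bennequin-type bound, so the prescribed $\tb$ may be arbitrarily large; Legendrian realization only gives $\tb=$ page framing, and pushing over stabilization handles can only \emph{decrease} it, so your parenthetical ``restart from a more complicated planar open book'' is precisely the nontrivial step (producing arbitrarily large framings on planar pages for the given $\xi$) and it is not carried out. (iii) Your mechanism for tuning the rotation number is incorrect: a canceling pair of stabilizations changes $\tb$ by $-2$ and leaves $\rot$ unchanged, so it cannot adjust $\rot$; to hit both invariants one must first arrange excess page framing with the correct parity and then apply an unbalanced number of positive and negative stabilizations on the page, checking that both signs can be realized while preserving planarity. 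As written, the proposal is a correct reduction together with an acknowledged placeholder where the actual argument of \cite{ona} (or its link analogue in \cite{rim1}) would have to go.
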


Using \fullref{thm:LegTsg}, as a corollary to the above result, we could prove the following.
\begin{corollary}
\label{cor:trans}
Suppose $\T$ is a loose, null-homologous transverse knot in $(\M,\xi)$. Then $\sg(\T)=0$.
\end{corollary}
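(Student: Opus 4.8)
The plan is to deduce the corollary from \fullref{thm:LegTsg} together with Onaran's theorem, so the only real work is to exhibit a Legendrian approximation of $\T$ that is loose, null-homologous, and meets the framing hypothesis of \fullref{thm:LegTsg}.

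First I would fix a Legendrian approximation $\Li$ of $\T$. It is smoothly isotopic to $\T$ and hence null-homologous, so the one thing requiring an argument is that $\Li$ is \emph{loose}. Since $\T$ is loose there is an overtwisted disk $D\subset\M\setminus\T$, and since $D$ is compact and disjoint from the closed curve $\T$ it is disjoint from some tubular neighborhood $N(\T)$. Using the standard fact that a Legendrian approximation of a transverse knot can be realized inside an arbitrarily small tubular neighborhood of that knot, I would take $\Li\subset N(\T)$; then $D\subset\M\setminus N(\T)\subset\M\setminus\Li$, so $\M\setminus\Li$ is overtwisted and $\Li$ is loose. I expect this transfer of looseness from $\T$ to $\Li$ to be the main (and essentially the only nonformal) point.

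It then remains to assemble the pieces. Applying Onaran's theorem to the loose, null-homologous Legendrian knot $\Li$ gives $\sg(\Li)=0$, i.e.\ a genus-zero open book supporting $(\M,\xi)$ carrying $\Li$ on a page with page framing equal to contact framing; in particular the contact framing of $\Li$ is at most the page framing, so the hypothesis of \fullref{thm:LegTsg} holds (if a strict inequality is preferred, negatively stabilize $\Li$ first, which preserves looseness and null-homology so that Onaran's theorem still applies). \fullref{thm:LegTsg} now gives $\sg(\T)=\sg(\Li)=0$, and since the support genus is by definition non-negative this is exactly the assertion $\sg(\T)=0$.
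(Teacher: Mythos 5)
Your proposal is correct, but it follows a genuinely different route from the paper's own proof of \fullref{cor:loosetransverse}. The paper argues by contradiction: it assumes $\sg(\T)=g>0$, uses \fullref{lemma:genus} to place a Legendrian approximation $L$ of $\T$ on the page of a minimal-genus open book with contact framing equal to page framing, then, since such an $L$ may be non-loose, negatively stabilizes it ``enough times so that it becomes loose'' (a step asserted rather than proved), and finally combines Onaran's \fullref{thm:ona} with the stabilization algorithm of \fullref{thm:binding} and the invariance result \fullref{thm:invariant} to land on a genus-zero open book and reach a contradiction. You instead manufacture a loose Legendrian approximation at the outset, by realizing $\Li$ inside a tubular neighborhood of $\T$ disjoint from a fixed overtwisted disk; this is correct, and it in fact supplies exactly the missing justification for the paper's ``stabilize until loose'' step, since approximations living in very small neighborhoods are the heavily negatively stabilized ones. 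After that, Onaran plus \fullref{thm:LegTsg} finishes, which is the route advertised in the introduction. One caveat worth noting: the framing hypothesis in \fullref{thm:LegTsg} is stated ambiguously, and in its proof it is only used for the inequality $\sg(\Li)\le\sg(\T)$ (to put $\Li$ on a page of a minimal-genus open book for $\T$ via \fullref{lemma:genus}); your corollary needs only the opposite inequality $\sg(\T)\le\sg(\Li)$, which holds for \emph{any} Legendrian approximation because its positive transverse push-off is $\T$ and the algorithm of \fullref{thm:binding} does not increase genus. So verifying the hypothesis against Onaran's genus-zero open book, as you do, is harmless here (genus zero is minimal anyway), but strictly speaking the paper intends the framing comparison to be made on an open book realizing $\sg(\T)$, so if you wanted the full strength of \fullref{thm:LegTsg} for a general $\T$ you would need to say more about why your chosen $\Li$ satisfies that version of the hypothesis.
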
 

It is well-known that non-loose transverse knots have all of their Legendrian approximations non-loose. But a non-loose Legendrian can have loose transverse push-off. Non-loose Legendrian unknots do not have any non-loose transverse push-off \cite{et}. On the other hand, in \cite{EMM} the authors found examples of  non-loose transverse torus knots in certain overtwisted contact structures which appears as transverse push-offs of non-loose Legendrian in the same contact structure. But they also have examples where the transverse push-offs of the torus knots become loose.
Nothing is known about the contidions that can guarentee non-loose transverse push-off. Non-zero support genus of a Legendrian knot could be an obstruction for a transverse push-off to be loose. We ask the following question:

{\bf Question:} If a non-loose Legendrian knot has $\sg>0$ then is it's transverse push-off always non-loose?

\subsection{Organization of the paper}
The paper has been organized in the following way: In \fullref{sec:basics}, we had a brief discussion of contact geometry and open book decomposition. In \fullref{sec:main}, we proved our main result \fullref{thm:sgtransverse} and end with a proof of \fullref{thm:LegTsg} and \fullref{cor:trans}.
\vspace{-2 mm}

\subsection{Acknowledgement}
I would like to thank Shea Vela-Vick  and John Etnyre for several enlightening conversations. I would also like to thank the anonymous referee for their helpful suggestions and insightful comments. This research is partially supported by NSF Grant 1907654 and the SFB/TRR 191 ``Symplectic Structures in Geometry, Algebra and Dynamics, funded by the Deutsche Forschungsgemeinschaff (Project- ID 281071066-TRR 191)''.

\section{Basics on contact geometry}
\label{sec:basics}
In this section, we briefly mention the preliminaries of contact geometry and open book decompositions. For more details the reader should check \cite{etknot}, \cite{etcontact} and \cite{etnyrelectures}.
\subsection{Contact structures}
A contact structure $\xi$ on an oriented $3$-manifold $\M$ is a nowhere integrable $2$-plane field and we call ($\M,\xi$) a contact manifold. We assume that the plane fields are co-oriented, so $\xi$ can be expressed as the kernel of some global one form $\alpha$. In this case, the non-integrability condition is equivalent to $\alpha\wedge d\alpha > 0$. 
There are two types of contact structures--tight and overtwisted.
An overtwisted disk is a disk embedded in a contact manifold $(\M,\xi$) such that $\xi$ is tangent to the disk along the boundary. We call a contact manifold overtwisted if it contains an overtwisted disk. Otherwise we call it tight.

Though only few results are known about classifying tight contact structures on manifolds, overtwisted contact structures are completely classified by Eliashberg.
\begin{theorem}{(Eliashberg, \cite{eliashbergovertwist})}Two overtwisted contact structures are isotopic if and only if they are homotopic as plane fields. Moreover, every homotopy class of oriented 2-plane fields contains an overtwisted contact structure.
\end{theorem}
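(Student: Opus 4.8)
The statement has two independent halves — \emph{existence} (every homotopy class of oriented $2$-plane fields on the closed oriented $3$-manifold $\M$ contains an overtwisted contact structure) and \emph{uniqueness} (two overtwisted contact structures on $\M$ that are homotopic as plane fields are isotopic) — and the plan is to treat them separately, reducing the uniqueness half to a relative $h$-principle for overtwisted structures on the complement of a ball carrying a fixed model overtwisted disk.

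For existence, I would recall that homotopy classes of oriented plane fields on $\M$ are governed by obstruction theory: over the $2$-skeleton the data is (essentially) a spin$^c$ structure, and over the top cell there is one further invariant, Gompf's $d_3$, taking values in a $\Z$- or $\Z/d$-torsor. Start from a contact structure produced by Martinet's theorem. Full and half Lutz twists along transverse knots produce overtwisted contact structures and move between homotopy classes; choosing the knots suitably one realizes every spin$^c$ class. To correct the remaining $d_3$ invariant one performs a modification supported in a small ball — replacing a standard overtwisted ball by one representing a different element of $\pi_3(S^2)$, i.e. a generalized Lutz twist / connected sum with an overtwisted $S^3$ — which shifts $d_3$ by a controlled amount and changes nothing else. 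Since these moves act transitively on homotopy classes of oriented plane fields, every class contains an overtwisted representative.

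The uniqueness half is the substantial one. First normalize: given overtwisted $\xi_0,\xi_1$ and a choice of overtwisted disk in each, use that overtwisted disks have standard contactomorphic neighborhoods to isotope both structures so that they agree with a fixed model on a ball $\B_0\subset\M$ containing the standard overtwisted disk. It then suffices to establish the $\pi_0$ part of the following relative, parametric $h$-principle: the restriction map from the space of contact structures on $\M$ agreeing with the model near $\B_0$ to the space of plane fields agreeing with it near $\B_0$ is a weak homotopy equivalence. One proves this by induction over a CW- or handle decomposition of $\M\setminus\mathrm{int}\,\B_0$: use convex surface theory and genericity to put the plane field and a given homotopy into a piecewise-linear normal form on the skeleton, and at each cell use the model overtwisted disk — transported by an isotopy into a neighborhood of the cell being treated — as a flexibility gadget that upgrades the prescribed local homotopy of plane fields to a homotopy through contact structures. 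Gray stability then turns the resulting path of contact structures joining $\xi_0$ to $\xi_1$ into an ambient isotopy, completing the uniqueness half; running the same argument with parameters is what makes the induction close, even though only $\pi_0$ is needed for the stated theorem.

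The main obstacle is exactly this cell-by-cell extension inside the uniqueness argument: showing that a homotopy of plane fields prescribed on the boundary of a cell, and already realized there through contact structures, can be filled in by a homotopy through contact structures whenever an overtwisted disk is available nearby — and doing so coherently over all cells and all parameters while staying fixed near $\B_0$ and near the portion of the skeleton already handled. This is the step where overtwistedness is genuinely used (tightness would obstruct it), and it is the technical heart of Eliashberg's theorem; by comparison the existence half is bookkeeping with Lutz twists once one knows the classification of plane fields on a $3$-manifold.
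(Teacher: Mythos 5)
The paper does not prove this statement: it is Eliashberg's classification of overtwisted contact structures, quoted with a citation to \cite{eliashbergovertwist} and used purely as background, so there is no in-paper argument to compare against. Measured against the actual literature, your outline follows exactly the standard route: existence via Martinet's theorem plus (full and half) Lutz twists, with obstruction theory --- the spin$^c$ class over the $2$-skeleton and a $d_3$-type invariant over the top cell, adjusted by a modification in a ball --- showing these moves reach every homotopy class of oriented plane fields; uniqueness by normalizing both structures on a ball containing a standard overtwisted disk and then proving a relative, parametric $h$-principle for the restriction map from contact structures to plane fields fixed near that ball, with Gray stability converting the resulting path of contact structures into an ambient isotopy. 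That is the correct architecture of Eliashberg's proof.

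As a proof, however, the proposal is only an architecture. The step you yourself flag as the ``main obstacle'' --- filling in, over each cell and coherently in families, a prescribed homotopy of plane fields by a homotopy through genuine contact structures, using an overtwisted disk transported into the region as the flexibility gadget --- is not a technicality to be deferred; it is essentially the entire content of the theorem, and it occupies most of \cite{eliashbergovertwist} (the reduction to a model extension problem over spheres/balls and the explicit construction of the extension there). Your sketch gives no indication of how that extension is carried out, nor why tightness would obstruct it while overtwistedness enables it, so the uniqueness half remains unproven as written. For a background result of this scale, citing Eliashberg, as the paper does, is the appropriate resolution; if you intend a self-contained proof, the extension lemma is precisely what must be supplied.
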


\subsection{Legendrian links}
A link $\Li$ smoothly embedded in $(\M,\xi)$ is said to be Legendrian if it is everywhere tangent to $\xi$. 
For the purpose of this paper, by classical invariants of a link we refer to the classical invariants of its components. To define the classical invariants all the components of the link must be null-homologous. 
 The classical invariants of a Legendrian knot are the topological knot type, \emph{Thurston--Benniquin invariant} $\tb(\Li)$ and \emph{rotation number} $\rot(\Li)$. The invariant
$\tb(\Li)$ measures the twisting of the contact framing relative to the framing given by the Seifert surface of $\Li$.  
The other classical invariant $\rot(\Li)$ is defined to be the winding of $\T\Li$ after trivializing $\xi$ along the Seifert surface. 
\begin{figure}[!htbp]
\centering
\includegraphics[scale=0.2]{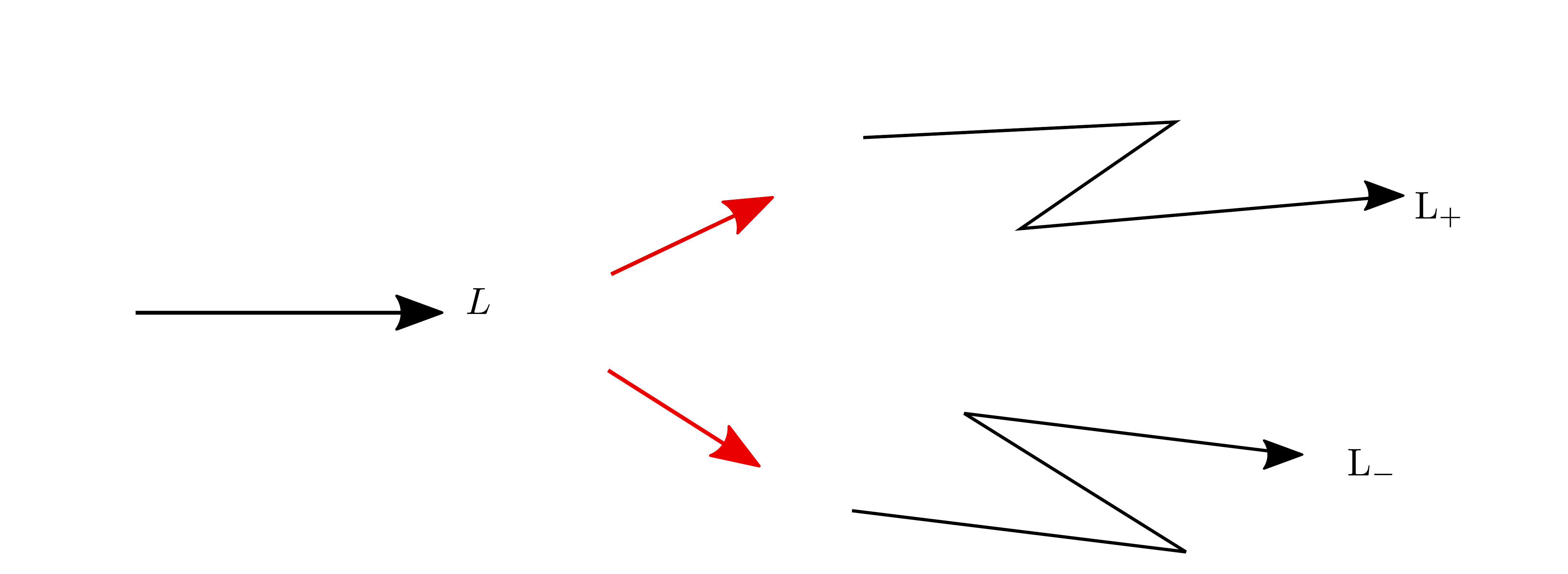}
\caption{Stabilizations of a Legendrian knot.}
\label{fig:stabilization}
\end{figure}
Stabilization of a link can be done by stabilizing any of the link components.
By the standard neighborhood theorem of a Legendrian knot, locally one can identify any Legendrian link component of $\Li$ with the $x$-axis in $\R^3$. Stabilization is a local operation as shown in \fullref{fig:stabilization}. The modification on the top right-side is called the positive stabilization and denoted as $\Li_+$. The modification on the bottom right-side is known as negative stabilizations and denoted as $\Li_-$. It does not matter which order the stabilizations are being done, it just matters where those are being done. The effect of the stabilizations on the classical invariants are as follows:
\[\tb(\Li_\pm))=\tb(\Li)-1 \quad \text{and}\quad \rot(\Li_\pm)=\rot(\Li)\pm 1.\]
\subsection{Transverse link and its relationship with a Legendrian link}
A link $\T$ in ($\M,\xi$) is called transverse (positively) if it intersects the contact planes transversely with each intersection positive. By classical invariant of a transverse link, we will refer to the classical invariants of its components.  There are two classical invariants for transverse knot, the topological knot type and the {\it self-linking number} $\slk(\T)$. Self-linking number is defined for null-homologous knots. Suppose $\Sigma$ is a Seifert surface of a transverse knot. As $\Sigma|_\xi$ is trivial, we can find a non-zero vector field $v$ over $\Sigma$ in $\xi$. Let $\T'$ be a copy of $\T$ obtained by pushing $\T$ slightly in the direction of $v$. The self-linking number $\slk(\T)$ is defined to be the linking number of $\T$ with $\T'$.

Legendrian and transverse links are related by the operations known as transverse push-off and Legendrian approximation. The classical invariants of a Legendrian link component and its transverse push-off are related as follows:
\[ \slk(\T_\pm)=\tb(\Li)\mp\rot(\Li)\] where $\T_\pm$ denotes the positive and negative transverse push-offs.
 In this paper, if we mention a transverse push-off it is always the positive transverse push-off unless explicitly stated otherwise. Note that, while a transverse push-off is well defined, a Legendrian approximation is only well defined up to negative stabilizations. %Similarly for a transverse knot, we have a Legendrian approximation. Check \cite{etknot} to see how to find a Legendrian approximation for a transverse knot. Unlike transverse push-offs Legendrian approximations are not well- defined. They are only well-defined upto negative stabilization \cite{eth}.
\subsection{Open book decomposition and supporting contact structures}
%Recall an \emph{open book decomposition} of a $3$-manifold is a pair $(\B,\pi)$ is a pair where $\B$ is an oriented, fibered link and $\pi\colon M\setminus\B\rightarrow S^1$ is the fibration of the complement by oriented surfaces whose boundary is $B$. The surface $\Sigma$ is called  the \emph{page} of the open book and $\B$ is called the \emph{binding}. 

%There is another notion of open books known as the abstract open book. An abstract open book decomposition is a pair $(\Sigma,\phi)$ such that $\Sigma$ is an oriented, compact surface with boundary and the map $\phi$ is a diffeomorphism which is identity near the boundary. We call $\phi$ the monodromy. By saying $\phi$ is the monodromy of the fibration we mean that $\M\setminus\B= \Sigma\times[0,1]/\sim$ where $(x,1)\sim(\phi(x),0)$.  Given an open book $(\Sigma,\phi)$ for $\M$, let $\Sigma'$ be $\Sigma$ with a $ 1$-handle attached. Suppose $c$ is a simple closed curve that intersects the co-core of the attached 1-handle exactly once. Set $\phi'=\phi\circ D_c^+$, where $D_c^+$ is a right handed Dehn-twist along $c$. The new open book $(\Sigma',\phi')$ is known as the \emph{positive stabilization} of $(\Sigma,\phi)$. If we use $D_c^-$ instead, that will be called a \emph{negative stabilization}. For details check \cite{etplanar}.

Recall an \emph{open book decomposition} of a $3$-manifold $\M$ is a triple $(\B,\Sigma,\phi)$ where $\B$ is a link in $\M$ such that $\M\setminus\B$ fibers over the circle with fiber $\Sigma$ and monodromy $\phi$ so that $\phi$ is identity near the boundary and each fiber of the fibration is a Seifert surface for $\B$. By saying $\phi$ is the monodromy of the fibration we mean that $\M\setminus\B= \Sigma\times[0,1]/\sim$ where $(x,1)\sim(\phi(x),0)$.
 The fibers of the fibration are called \emph{pages} of the open book and $\B$ is called the \emph{binding}.
 Note that, given a diffeomorphism $\phi$ of the surface $\Sigma$ that is fixed near the boundary one might form a mapping torus and glue in solid tori to build a closed $3$-manifold having an open book decomposion $(\Sigma,\phi)$. So from now on we will drop the term $B$ from the notation and use the pair $(\Sigma,\phi)$ as the open book. 
 
 Given an open book $(\Sigma,\phi)$ for $\M$, let $\Sigma'$ be $\Sigma$ with a $ 1$-handle attached. Suppose $c$ is a simple closed curve that intersects the co-core of the attached 1-handle exactly once. Set $\phi'=\phi\circ D_c^+$, where $D_c^+$ is a right handed Dehn-twist along $c$. The new open book $(\Sigma',\phi')$ is known as the \emph{positive stabilization} of $(\Sigma,\phi)$. If we use $D_c^-$ instead, that will be called a \emph{negative stabilization}. For details check \cite{etplanar}.

We say a contact structure $\xi=\ker\alpha$ on $\M$ is supported by an open book decomposition ($ \Sigma,\phi)$ of $\M$ if
\begin{enumerate}
	
	\item  $d\alpha$ is a positive area form on the page of the open book.
	\item  $\alpha(v)>0$, for each oriented tangent vector to the binding.
\end{enumerate}
Given an open book decomposition of a 3-manifold $\M$, Thurston and Winkelnkemper \cite{thurston} showed how one can produce a compatible contact structure. Giroux proved that two contact structures which are compatible with the same open book are isotopic as contact structures \cite{giroux}. Giroux also proved that two contact structures are isotopic if and only if they are compatible with open books which are related by positive stabilizations.  Thus positive stabilizations of an open book do not change the contact structure it supports, but this is not the case for negative stabilizations. 

It is well known that every closed oriented 3-manifold has an open book decomposition. We can perform an operation called \emph{Murasugi sum} to connect sum two open books and produce a new open book. An interested reader should check \cite{etnyrelectures} for details.

\section{Transverse links and open book decomposition}
\label{sec:main}

We can always associate a Legendrian link in a contact manifold with a compatible open book by putting the Legendrian link on the page of the open book such that the page framing and the contact framing agree. Check \cite{etnyrelectures} for how to do it. But it is not so obvious in the case of transverse links. While Baker and Etnyre showed that we can make any transverse knot a part of a binding of a compatible open book in \cite{bakeretnyre}, it turns out to be much more complicated for a transverse link. We give an explicit proof that any transverse link in any contact manifold can be associated with a compatible open book as mentioned above.

 \begin{theorem}
 \label{thm:binding}
 Let $\T$ be any transverse link in $(\M,\xi)$. Then $\T$ is transversely isotopic to a sub-binding of some open book $(\Sigma,\phi)$ supporting $(M, \xi)$.
 \end{theorem}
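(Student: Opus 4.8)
The plan is to reduce the link case to the knot case plus a careful bookkeeping of how the components interact with the pages. First I would recall the Baker–Etnyre construction for a single transverse knot: given a transverse knot $\T_0$, one finds a compatible open book in which $\T_0$ is a binding component, essentially by taking a compatible open book, putting a Legendrian approximation on a page, and then using a sequence of positive stabilizations together with the observation that the positive transverse push-off of a Legendrian on a page can be isotoped onto a new binding component created by a stabilization supported near that curve. So the natural first step is: take any open book $(\Sigma,\phi)$ supporting $(\M,\xi)$ (one exists by Giroux), take a Legendrian approximation $\Li$ of $\T$ whose components we may arrange, after negative stabilizations if necessary, to sit on a single page of $(\Sigma,\phi)$ with page framing equal to contact framing (this is the standard ``Legendrian link on a page'' fact cited from \cite{etnyrelectures}).

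Next I would process the components one at a time. The key local move is: given a Legendrian curve $c$ lying on a page, perform a positive stabilization of the open book along $c$ — i.e.\ attach a $1$-handle to $\Sigma$ along a neighborhood of the endpoints of a subarc and Murasugi-sum in a Hopf band — so that $c$ becomes isotopic to (part of) the new binding, and the transverse push-off of $c$ becomes transversely isotopic to that binding component with the correct self-linking number. The subtlety in the link case, which is the main obstacle, is that stabilizing along one component $\Li_i$ changes the page, and one must check that the other components $\Li_j$, $j \ne i$, can be kept disjointly on the (new, larger) page with their page framings still equal to their contact framings, and that their transverse push-offs are unaffected. Since positive stabilization is a local operation supported in a neighborhood of $c$, and since we may take that neighborhood disjoint from the other components, this is plausible — but one has to be careful that the ``push onto the binding'' isotopy for $\Li_i$ does not drag the other components, and that when we later stabilize along $\Li_j$ we haven't destroyed the binding component already produced for $\Li_i$. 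I would handle this by doing all the stabilizations in pairwise disjoint neighborhoods of the $\Li_i$'s simultaneously (or in any order, invoking that the supports are disjoint), so that the resulting open book $(\Sigma',\phi')$ — still supporting $(\M,\xi)$ by Giroux, since all stabilizations are positive — has a binding containing a component transversely isotopic to each $\T_i$.

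Finally I would assemble the pieces: the sub-link of the binding of $(\Sigma',\phi')$ consisting of the components produced from $\Li_1,\dots,\Li_n$ is, componentwise, transversely isotopic to $\T$; and because the transverse isotopy type of the positive transverse push-off of a Legendrian depends only on the Legendrian isotopy class (and the self-linking numbers match via $\slk(\T_i) = \tb(\Li_i) - \rot(\Li_i)$), the whole link $\T$ is transversely isotopic to this sub-binding. The one point requiring genuine care, and which I expect to be where the real work lies, is verifying that the simultaneous local modifications are compatible — that no component's stabilization interferes with another's page placement or push-off — and that linking numbers between distinct components are preserved throughout (a transverse isotopy must respect these, unlike the knot case where there is nothing to check). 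I would spell this out by fixing disjoint standard neighborhoods of the $\Li_i$ at the outset and confining every stabilization and every isotopy to those neighborhoods, leaving the global geometry — and hence all inter-component linking data — untouched.
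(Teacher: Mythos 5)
Your overall strategy is the paper's: approximate $\T$ by a Legendrian link on a page, then use the Baker--Etnyre stabilization (Lemma \ref{lemma:crucial}) to create, for each component, a new binding component that is its transverse push-off. But there is a genuine gap at exactly the point you flag as ``where the real work lies,'' and your proposed fix does not work. The stabilization of Lemma \ref{lemma:crucial} is not supported in a small neighborhood of the Legendrian component $\Li_i$ alone: the stabilizing curve must run from an \emph{existing binding component} along an arc $\gamma_i$ to $\Li_i$, approaching it from the right (this right-handedness is what guarantees the new binding component is the transverse push-off of a negative stabilization of $\Li_i$, hence transversely isotopic to $\T_i$). So the support of each modification is a band sweeping across the page from the binding to the component, not a tubular neighborhood of the component. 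Consequently your claim that all the stabilizations can be performed ``in pairwise disjoint neighborhoods\dots simultaneously (or in any order)'' fails precisely when some components are parallel (nested) copies of one another on the page: any arc from the binding to an inner copy that stays to the right must cross the other copies, so the supports cannot be made disjoint and the order of the Dehn twists matters.

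The paper's proof is devoted to resolving exactly this. When parallel copies are coherently oriented, one processes them from the copy nearest the binding outward, choosing each new arc on the already-stabilized open book so that it only meets components that have already been dealt with (Cases 1(b)(i), 2(b)(i)); the Dehn twists must then be performed in that order. When some parallel copies are oppositely oriented, no such arc to the right exists at all on the given page, and one must first positively stabilize the open book along a boundary-parallel curve and push the offending components over the new $1$-handle --- this negatively stabilizes them as Legendrians (harmless, since negative stabilization preserves the transverse push-off) and creates a fresh binding component on the correct side from which the right-approaching arc can be drawn (Cases 1(b)(ii), 2(b)(ii)). Your proposal as written covers only the case in which disjoint right-approaching arcs exist (the ``linearly independent'' cases 1(a) and 2(a)); to complete it you would need to add the ordered, iterated stabilization scheme and the extra boundary-parallel stabilization for oppositely oriented parallel components.
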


 In \cite{bakeretnyre}, the authors prove the following lemma which becomes crucial in the proof of our main theorem.
\begin{lemma}[\cite{bakeretnyre}]
	\label{lemma:crucial}
	
	Let $(\Sigma,\phi)$ be an open book decomposition for a contact manifold $(M,\xi)$ with binding L. Assume $K$ is an oriented Legendrian knot on a page of the open book decomposition such that the page framing and the contact framing agree. Let $\gamma$ be an arc on the page running from one binding component of the open book decomposition to the knot $K$ and approaches the knot $K$ from the right. Set $\alpha$ to be a curve which runs from $L$ along $\gamma$ around $K$ and back to $L$ along  a parallel copy of $\gamma$. The open book $(\Sigma_\alpha,\phi_\alpha)$ obtained from $(\Sigma,\phi)$ by positively stabilizing along $\alpha$ has a binding component $B$ that is the transverse push-off of $K$.
	\end{lemma}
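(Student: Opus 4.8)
The plan is to reduce everything to a standard model near $K$ and to exhibit the new binding component $B$ directly as the positive transverse push-off there. Two features of the construction come for free. First, positive stabilization does not change the supported contact structure (Giroux), so $(\Sigma_\alpha,\phi_\alpha)$ still supports $(M,\xi)$. Second, every binding component of a supporting open book is positively transverse to $\xi$, by condition (2) in the definition of support. Hence $B$ is automatically a positively transverse knot, and the entire problem is to identify its transverse isotopy type. Since transverse knots are not determined by their topological type and self-linking number alone, I will not argue by invariants; instead I will place $B$ inside a standard neighborhood of $K$ and recognize it there as the standard push-off, using the self-linking computation only as a consistency check.

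First I would use the hypothesis that the page framing and the contact framing of $K$ agree to fix a standard contact neighborhood $\N \cong S^1 \times \D^2$ of $K$, with $K = S^1 \times \{0\}$, in which $\xi|_\N = \xistd$ is the standard Legendrian-neighborhood contact structure and the page meets $\N$ in the horizontal annulus $A_0 = S^1 \times [-1,1] \times \{0\}$. In these coordinates the two transverse push-offs of $K$ are the two parallel copies of $K$ obtained by pushing $K$ off in the $\pm$ normal direction within $A_0$; the co-orientation of $\xi$ singles out one of them as the positive transverse push-off $\T_+$.

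Next I would realize the stabilization locally. The arc $\gamma$ enters $\N$ from the binding $L$ and, approaching $K$ from the right, $\alpha$ runs once around $K$ inside $A_0$ and returns to $L$ along the parallel copy of $\gamma$. Positively stabilizing along $\alpha$ means attaching a $1$-handle joining the two feet of $\alpha$ on $L$, forming the simple closed curve $c = \alpha \cup (\text{core of the handle})$, and composing the monodromy with the right-handed Dehn twist $D_c^+$. Because both feet lie on the single boundary circle $L$, attaching this handle splits $L$ into two boundary components, and because $\alpha$ encircles $K$ longitudinally, the new small component $B$ follows $\gamma$ out, runs once around $K$, and comes back over the handle; thus $B$ is smoothly a longitude of $K$ inside $\N$, i.e. a parallel push-off of $K$.

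Finally I would pin down the sign. The orientation data --- that $\gamma$ approaches $K$ from the right and that the stabilizing twist $D_c^+$ is right-handed --- determine on which side of $A_0$ the component $B$ lies and the orientation it inherits as a binding; carrying these through the local model shows that $B$ coincides with the standard positive transverse push-off $\T_+$ inside $\N$, and since both are standard push-offs in $\N$ they are transversely isotopic. As a check one verifies $\slk(B) = \tb(K) - \rot(K)$, the known self-linking number of the positive transverse push-off. The main obstacle is precisely this last bookkeeping: tracking the binding faithfully through the handle attachment and matching orientations so that $B$ is the positive rather than the negative push-off. The ``from the right'' hypothesis together with the positivity of the stabilization are exactly what force the positive sign, and making that matching rigorous in the local model is the technical heart of the argument.
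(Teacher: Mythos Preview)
The paper does not prove this lemma; it is quoted from Baker--Etnyre and used as a black box in the proof of \fullref{thm:binding}, so there is no in-paper argument to match. Your outline is plausible and your picture of $B$ in $M$ is roughly right: after plumbing the positive Hopf band along $\alpha$, the new binding component does trace a path that runs out along $\gamma$, encircles $K$ once, and returns. But $B$ is not literally contained in your standard neighborhood $\N$ --- it still carries a tail consisting of a short arc of the old binding $L$ together with an edge of the attached handle --- and the step you yourself flag as the technical heart, producing a \emph{transverse} isotopy carrying $B$ into $\N$ and onto $\T_+$, is asserted rather than carried out. That step is genuinely where the content lies, and the local model by itself does not supply it.

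The Baker--Etnyre argument, which the paper invokes implicitly in \fullref{ssec:case1a}, sidesteps this by staying on the page. On the stabilized page there is a simple closed curve $K'$ running over the new $1$-handle exactly once which cobounds an annulus with $B$ and with no other binding component; hence $B$ is the transverse push-off of the Legendrian realization of $K'$. One then checks that $K'$ is a negative stabilization of $K$ --- this is exactly where ``$\gamma$ approaches $K$ from the right'' enters, since sliding $K$ over the handle on that side realizes a negative stabilization on the page --- and a Legendrian and its negative stabilization have transversely isotopic positive push-offs. This replaces your local-model orientation chase with two standard open-book facts and makes the role of the ``from the right'' hypothesis transparent.
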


  For notational purpose, we will be using the word ``to the right'' in the following sense: an arc ``to the right'' of a link component will imply that the orientation of the link component followed by the orientation of the arc agrees with the orientation of their intersection point on the page. A set of arc $\Gamma= \{\gamma_1,\gamma_2,\dots \gamma_n\}$ is ``to the right of $\Li$''  means that each of the $\gamma_i$ lies to the right of $\Li_i$ where $\Li_i$ is the $i^{th}$ link component.

\begin{figure}[!htbp]
\centering
\includegraphics[scale=0.07]{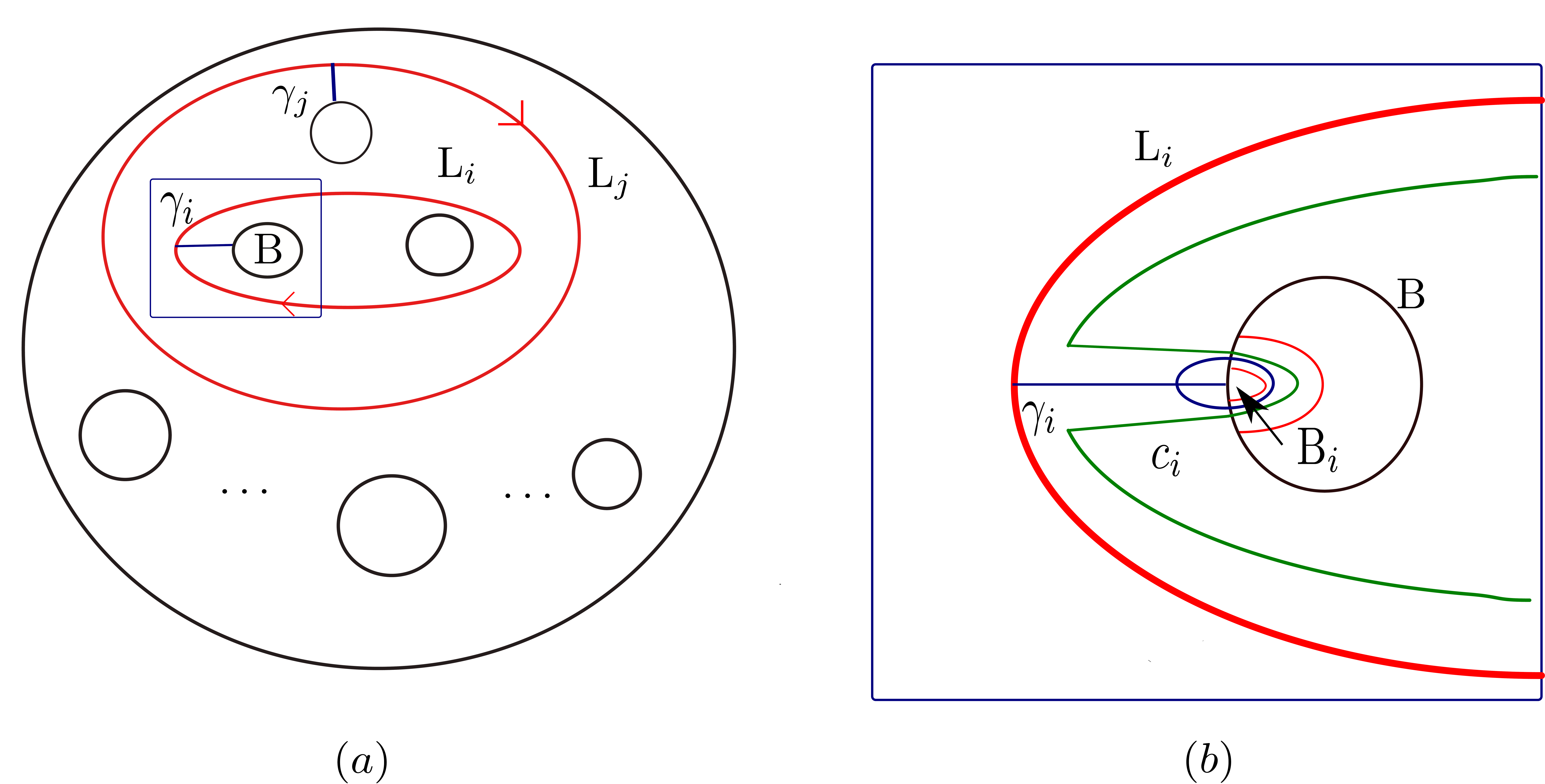}

\caption{(a) A planar open book and the disjoint arcs. (b) Enlarged view of the region inside the box in disjoint arcs lying "to the right".}
\label{fig:planar(a)}
\end{figure}
\begin{proof}[Proof of \fullref{thm:binding}]
Suppose $\T$ is any transverse link in $(\M,\xi)$ and $\Li$ is its Legendrian approximation. Now we can put $\Li$ on the page of an open book supporting $(\M,\xi)$. There are several ways to put a Legendrian link on the page of some supporting open book such that the page framing and contact framing agree (corollary 4.23 in \cite{etnyrelectures}). Observe that all of the components have a fixed orientation.  There are several cases to consider here and we will prove the theorem for each of the cases. Also for notational purpose, we will call two link components `linearly dependent' if they are parallel copies of one another on the page of the open book. Otherwise, they will be `linearly independent'.
\begin{remark}

Note that we might not be able to put the Legendrian link $L$ on the page of every supporting open book of $(\M,\xi)$. For example we can not Legendrian realize an unknot on the open book $(D,id)$ supporting $(S^3,\xistd)$. But we can always find one where we can Legendrian realize it. We start with that particular open book.

\end{remark}

\subsection{\bf Case 1(a)}

\label{ssec:case1a}
[$(\Sigma,\phi)$ is planar and no two link components are linearly dependent] If each of the link components bounds exactly one binding component, then clearly that binding component is its transverse push-off and we are done. If not, we start by finding a set of disjoint arcs $\{\gamma_1,\dots\gamma_n\}$ such that each $\gamma_i$ runs from the closest boundary component of $\Li_i$ to $\Li_i$ and stays ``to the right". Clearly, if all the link components are linearly independent, we can easily find such a set where each of the arcs $\gamma_i$ is disjoint from $\Li_j$ for $j\neq i$ irrespective of the orientation of the link components. See \fullref{fig:planar(a)}(a). Let $c_i$ be the arc which that runs from the binding component $\B_i$ along $\gamma_i$ around $\Li_i$ and back to $\B_i$ following a parallel copy $\gamma_i$ for each $i$. Next we positively stabilize $\B_i$ along $c_i$ (Notice, here $c_i$ we mean $c_i  \cup$ core of the attaching one handle.) and find a link component $\Li_i'$ that runs around the attaching 1-handle exactly once and is topologically isotopic to the link component $\Li_i$ in the whole manifold. Legendrian realize $\Li_i'$. Clearly the new boundary component $\B_i'$ is the transverse push-off of $\Li_i'$. See \fullref{fig:planar(a)}(b). By our choice of $\gamma_i$, $\Li_i'$ is the negative stabilization of $\Li_i$ \cite{bakeretnyre}. Thus they have transversely isotopic transverse push-offs. So $\B_i'$ is $\Li_i$'s transverse push-off as well. The new open book will be $(\Sigma',\phi\circ\D_{c_i})$. After doing the Dehn twist along all such $c_i$'s we find a n-component link $\B'$ such that $\B$ is the transverse push-off of $\Li$. The new monodromy will be given by $\phi\circ\D_{c_1}\circ\dots\D_{c_{n-1}}\circ\D_{c_n}$. By the well-definedness of transverse push-off, $\B'$ is transversely isotopic to $\T$. Observe that, as all of the $c_i$'s are disjoint from each other, the order of Dehn twist does not matter. 
\begin{figure}[!htbp]
\includegraphics[scale=0.08]{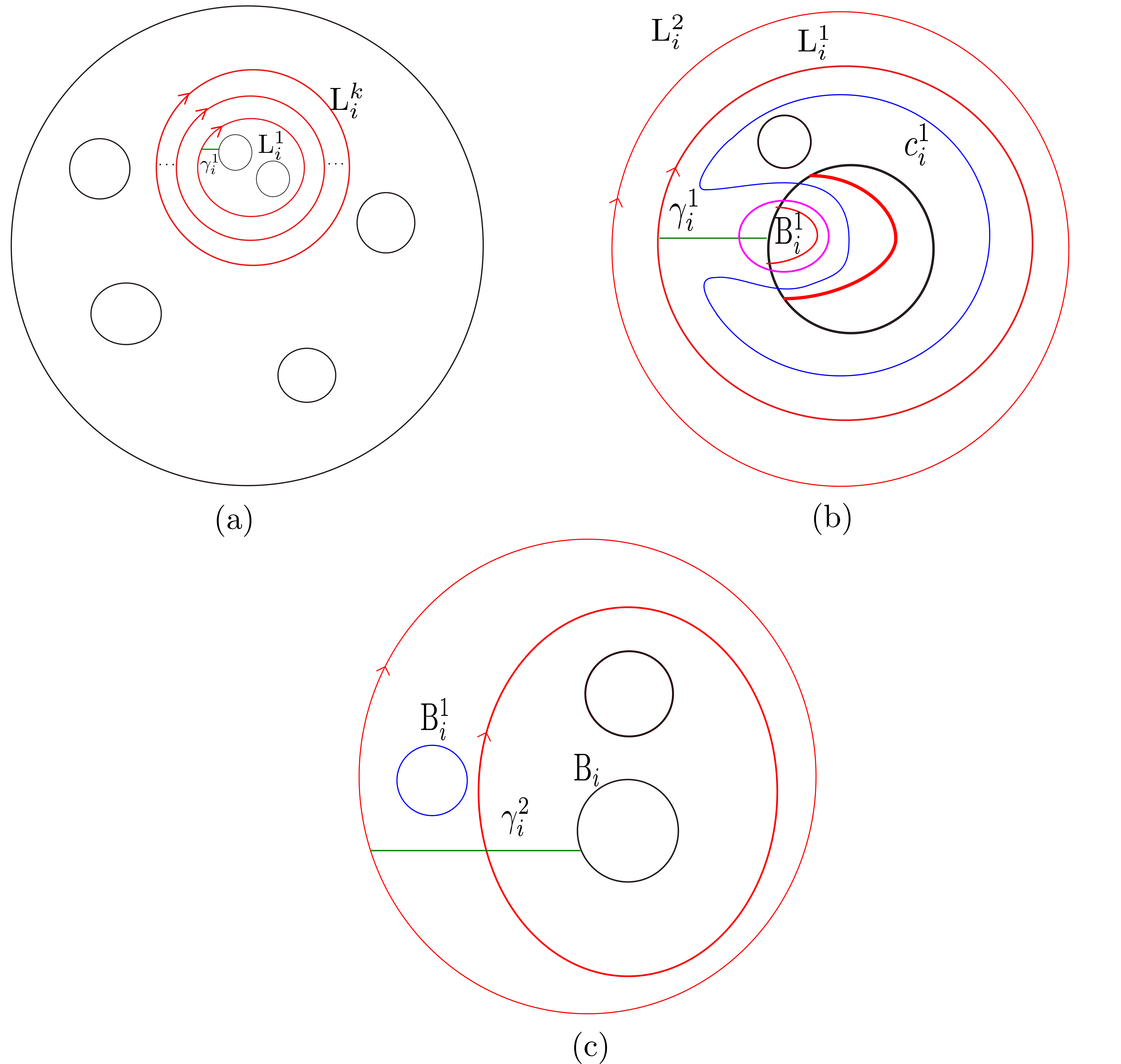}

\caption{(a) A planar open book where $\Li_i$ has $k$ parallel copies. (b) A local picture with two parallel components. We stabilize the open book along one of the innermost boundary component. (c) The new boundary component is $\B_i^1$. Now choose $\gamma_i^2$ on the stabilized open book which only intersects $\Li_i^1$.}
\label{fig:planar2}
\end{figure}

\subsection{\bf Case 1(b)}
\label{ssec:case1b}
[$(\Sigma,\phi)$ is planar and some of the link components are parallel copies of one another] Suppose $\Li_i$ has $k$ parallel copies and we will call them $\Li_i^j$ for $j=1,2,\dots k$. We will treat this case differently. In this case either all the link components are oriented similarly or some of them can have opposite orientations.
\subsubsection{Case 1(b)(i)} 
\label{sssec:case1b(i)}
First we consider the case where all $\Li_i^k$'s are oriented similarly. Check the local picture \fullref{fig:planar2}. Choose the link component that has a boundary component closest to it so that the arc $\gamma_i^1$ lies ``to the right''.  It can be the outer most or the innermost $\Li_i^k$ according to the orientation. Without loss of generality, we assume it to be the innermost component and call it $\Li_i^1$. Let $\B_i$ be its closest boundary component. We call the other components $\Li_i^k$ where $k=2,3\dots $ and order them from innermost to outermost. Now we find $c_i^1$ using $\gamma_i^1$ as before and positively stabilize the open book along $\B_i$ and find $\B_i^1$ as the transverse push-off of  the negatively stabilized $\Li_i^1$ as mentioned in \fullref{ssec:case1a}. Once we find $\B_i^1$, we follow the same procedure on the new open book $( \Sigma',\phi\circ D_{c_i^1})$ and find an arc $\gamma_i^2$ that runs from $\B_i$ to $\Li_i^2$ and stays to the right. Now notice here, this arc could possibly only intersect $\Li_i^1$ and no other link component. As we have already dealt with $\Li_i^1$, this does not create any problem. Now use $\gamma_i^2$ to find $B_i^2$ using the same procedure as before. Inductively, doing so we find a $k$-component link $\B_i^1\sqcup\dots\B_i^k$ such that $\Li_i^1\sqcup\dots\sqcup\Li_i^k$ has transverse push-off transversely isotopic to $\B_i^1\sqcup\dots\B_i^k$. We can do this locally for all the link components which are not linearly independent. Notice, as the curves intersect each other, we strictly need to maintain the order of Dehn twist in this case.
\begin{figure}
\includegraphics[scale=0.14]{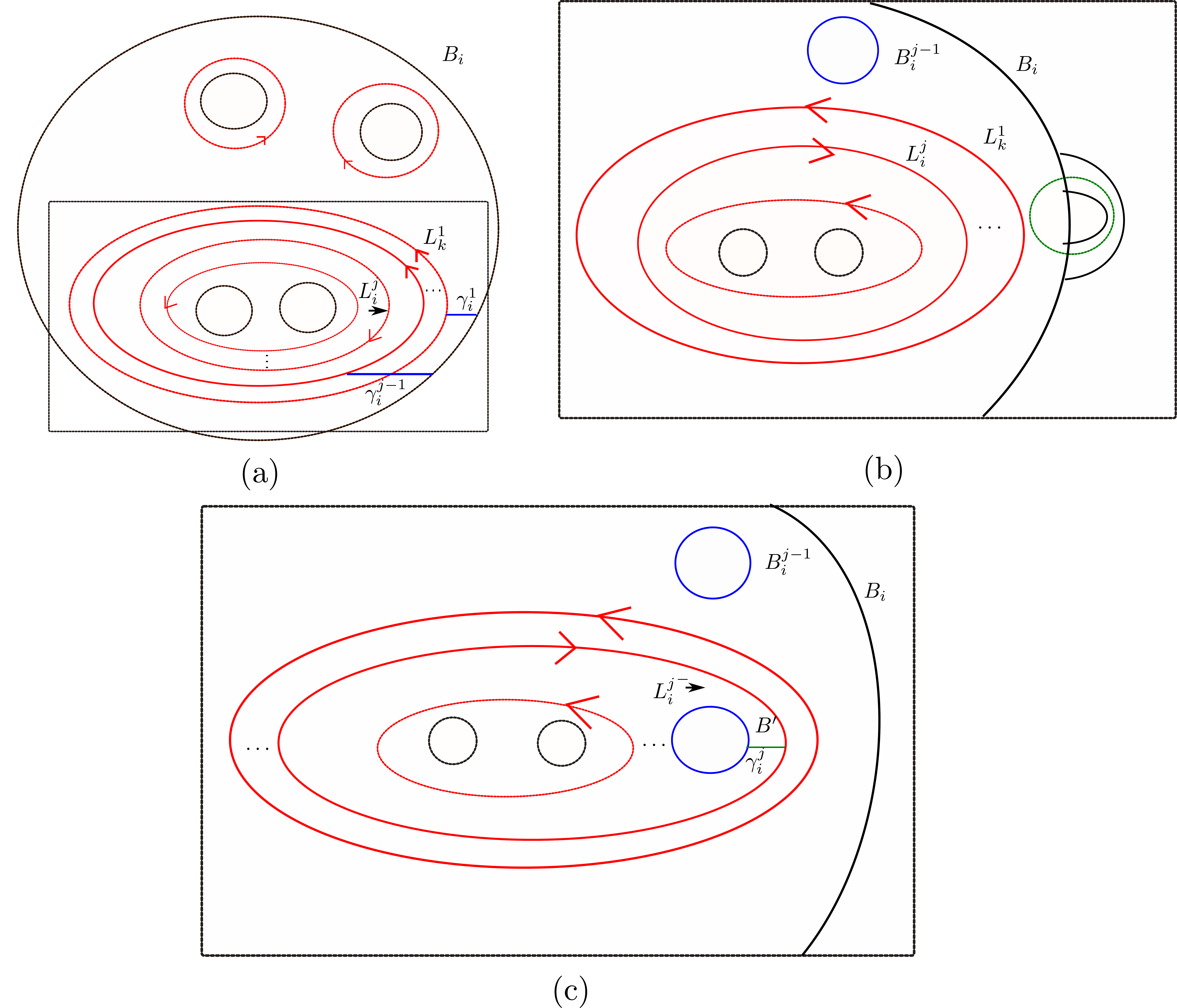}
\caption{(a) A planar open book with linearly dependent link components and distinct orientations. (b) A local picture of the stabilized open book. (c) Push the components over the attaching one handle. $\Li_i^{j-}$ denotes the negatively stabilized component. Now use the new boundary component $\B'$ to find the arc $\gamma_i^j$ to the right.}
\label{fig:distinctorientation}
\end{figure}

\subsubsection{Case 1(b)(ii)}
\label{sssec:differentorientation} In this case, some of the link components are oriented opposite and thus we cannot use the same boundary component for all of them like we did in \fullref{sssec:case1b(i)}. First we choose a link component $\Li_i^k$ such that there exists an arc that joins $\Li_i^k$ to its closest boundary component $\B_i$, lies to the right and is disjoint from all other $\Li_i^j$'s for $j\neq k$. It can be the innermost or outermost link component depending on the orientation. Without loss of generality, suppose it is the outer most component and call it $\Li_i^1$ and the arc $\gamma_i^1$. We name the other parallel components $\Li_i^j$ where $j=2,3,\cdots$ as we move from the innermost component to outermost component. Check \fullref{fig:distinctorientation}. Now we find $ c_i^1$ as before and positively stabilize $\B_i$ along it. Now we have a new boundary component $\B_i^1$ which will be transversely isotopic to the transverse push-off of $\Li_k^1$. Now for the next parallel component $\Li_k^2$, if it has the same orientation, we can easily find an arc $\gamma_i^2$ that lies ``to the right'', intersects $\Li_i^1$ and no other link component. We repeat this process step by step till we find a component which has a different orientation. Suppose $\Li_i^j$ is the link component with different orientation. Notice, now we can not find an arc that lies to the right and is disjoint from $\Li_i^l$ where $l>j$.  
To fix this problem, we first positively stabilize the open book along the boundary component $\B_i$ along a boundary parallel curve and push all $\Li_i^k$'s over the attaching 1-handle where $k=1,2\dots j$. 
%By \fullref{lemma:Positive_stab_Leg},
This negatively stabilizes $\Li_i^j$. So $\Li_i^j$ and $\Li_i{^j}^-$ will have isotopic transverse push off.  Now using the new boundary component $\B_i'$ coming from the stabilization, we can find an arc $\gamma_i^j$ that lies ``to the right'' of $\Li_i^j$ and is disjoint from all $\Li_i^l$ for $l>j$. Check \fullref{fig:distinctorientation}(c). Now we can continue using $\B_i'$ for all the link components till we find a link component that is oriented differently. Inductively, doing so will give us a transverse link that is a sub-binding of the open book.

\begin{figure}[!htbp]
\includegraphics[scale=.1]{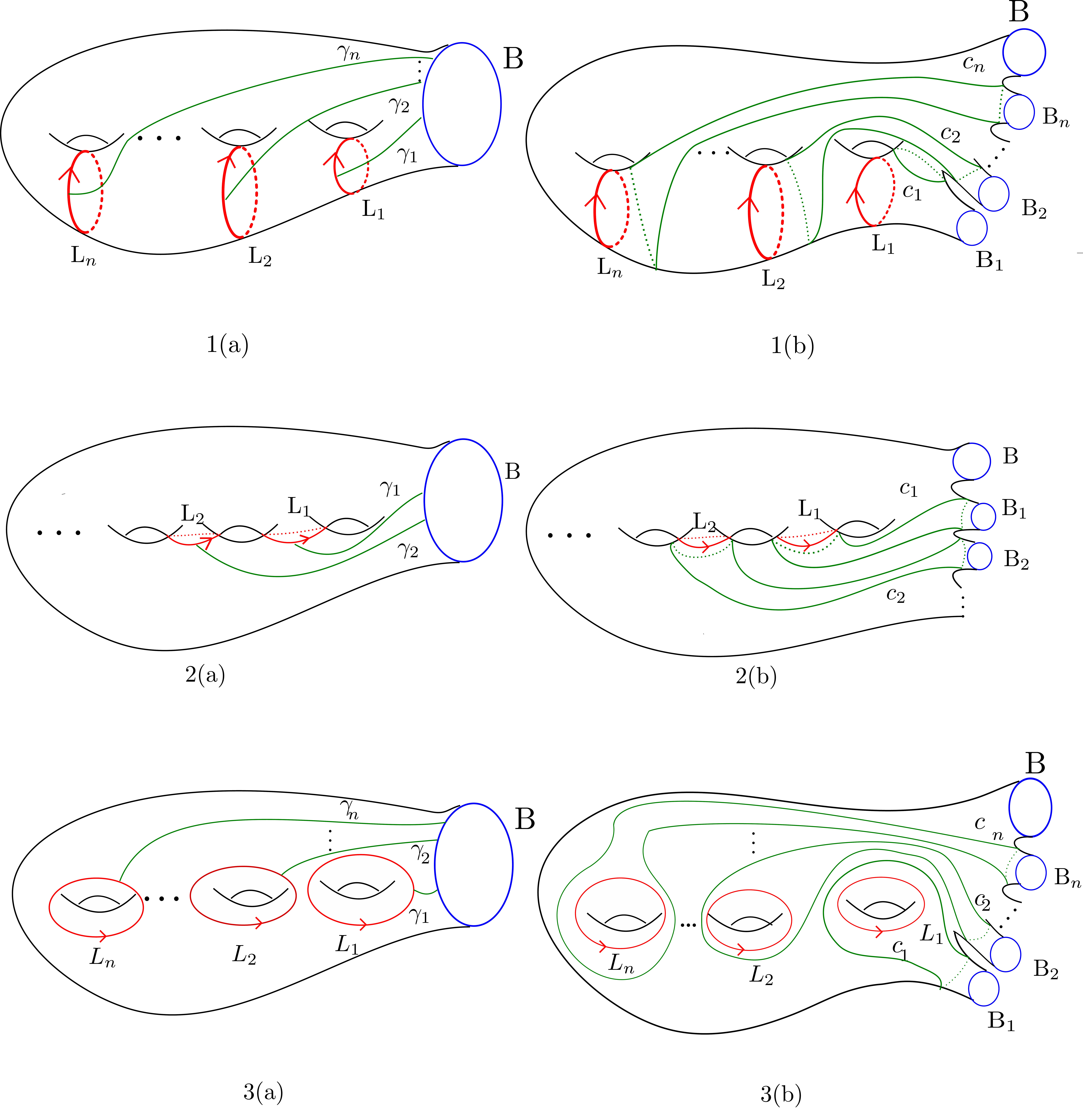}

\caption{All possible cases where the link components are of the same type. On the left, the set of disjoint arcs that lie ``to the right''. On the right the resulting open book with the transverse link $\sqcup_{i=1}^{n}\B_i$.}
\label{fig:genussametype}
\end{figure}
\subsection{\bf Case 2(a)}
\label{ssec:case2a}
[$(\Sigma_g,\phi)$ has $g>0$ and all the link components are linearly independent]

 This is an easy case to deal with. Here we can have all link components of the same type (\fullref{fig:genussametype}) or different type (\fullref{fig:genusmixedtype}). But irrespective of the types and orientation clearly there exists a set of disjoint arcs $\Gamma=\{\gamma_1,\dots\gamma_n\}$ that lies on the ``right'' of $\Li$ and runs from $\Li_i$ to $\B$ as shown in \fullref{fig:genussametype}(a) and \fullref{fig:genusmixedtype}(a). Now we will use $\Gamma$ to find a set of disjoint closed curves $\{ c_1,c_2,\dots c_n\}$ like before and stabilize $\B$ along $c_i$'s for every $i=1,2,\dots n$. Finally we will find an $n$ component link $\B'=\sqcup_{i=1}^{n}\B_i$ as before which is the transverse push-off of $\Li$ for the same reason and thus $\B'$ is transversely isotopic to $\T$. The new open book will have monodromy $\phi\circ\D_{c_1}\circ\cdots\D_{c_{n-1}}\circ\D_{c_n}$ and as all the curves $\{c_1,c_2,\dots, c_n \}$ are disjoint from the link components the order of Dehn twist does not matter in this case.
\begin{figure}[!htbp]
\centering
\includegraphics[scale=0.1]{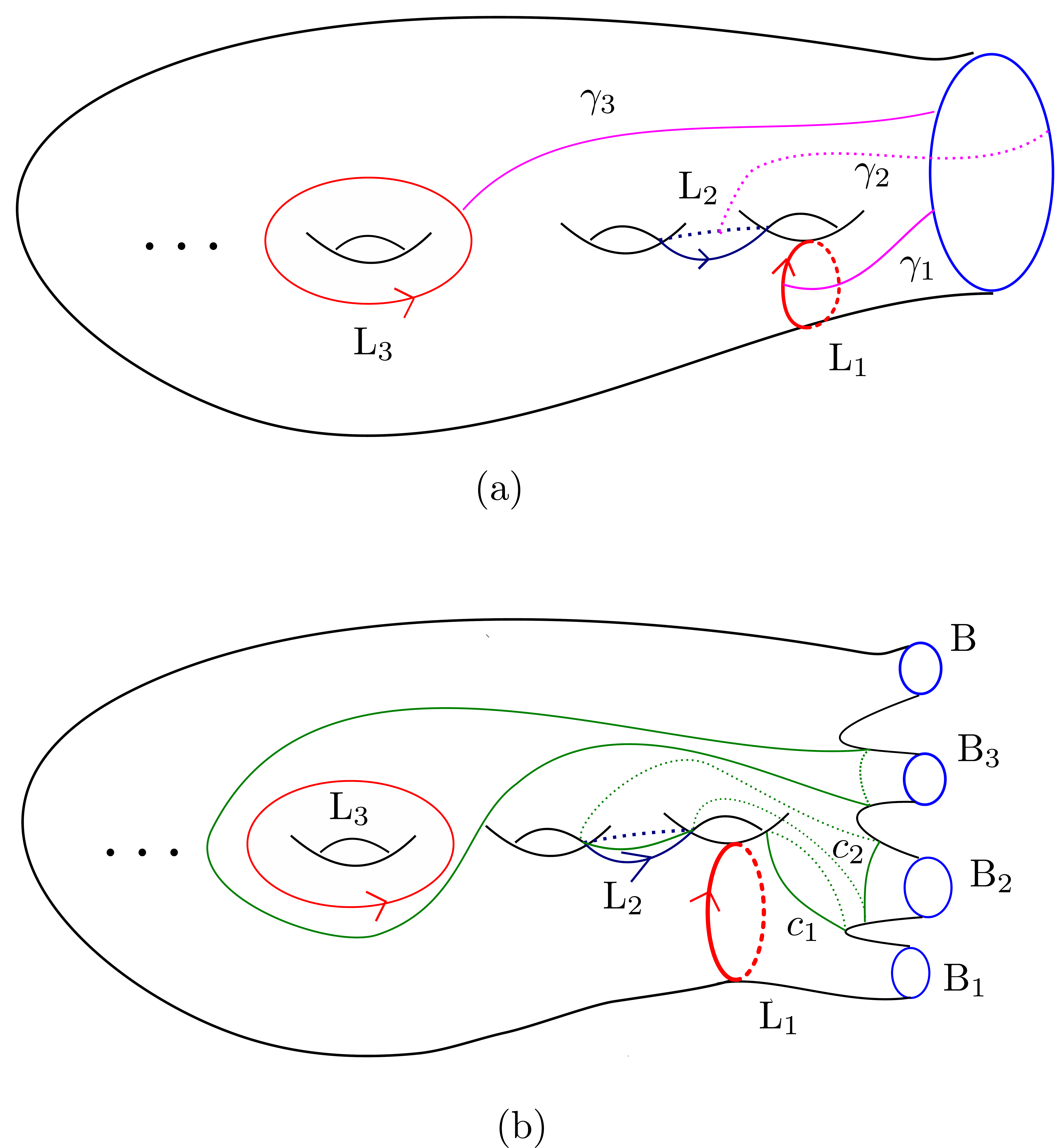}

\caption{(a)Example of an open book where we have mixed type of link components. (b) The resulting open book after we do a Dehn twist along $c_1, c_2$ and $c_3$. }
\label{fig:genusmixedtype}
\end{figure}

\begin{figure}[!htbp]
\centering
\includegraphics[scale=0.1]{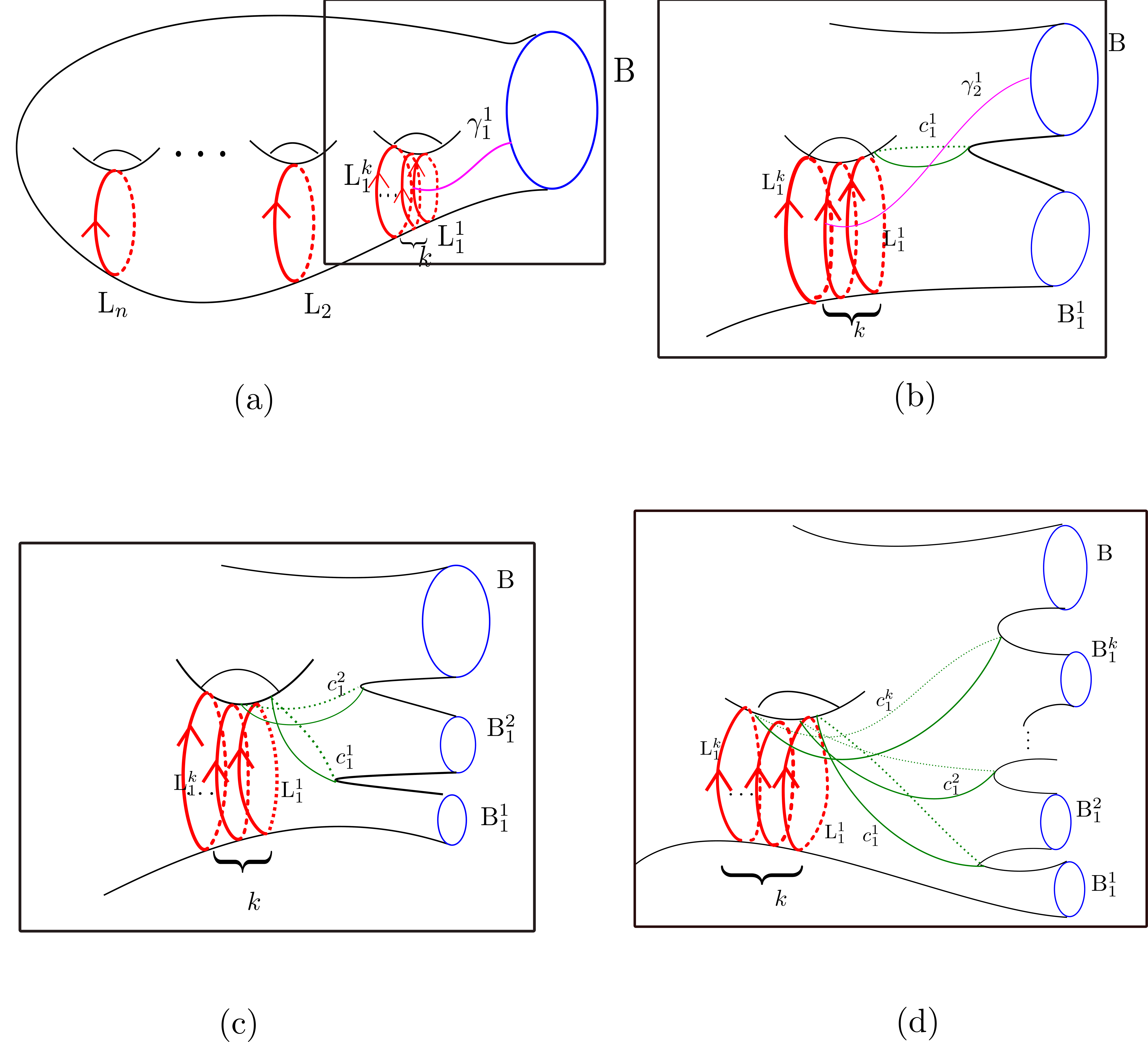}

\caption{(a) $\Li_1$ has $k$ parallel copies. (b) Enlarged view of the region near $\Li_1$. (c) The once stabilized open book along $c_1^1$. Find $\gamma_1^2$ and iterate this process. (d) The final result after $k$ iterated stabilizations of $\B$.}
\label{fig:case2b}
\end{figure}

\subsection{\bf Case 2(b)}
\label{ssec:case2b}
[$(\Sigma_g,\phi)$ has $g>0$ and not all the link components are linearly independent]
 Suppose $\Li_i$ has $k$ parallel copies and we call them $\Li_i^j$ for $j=1,2,\dots k$. This can have the following two sub cases.
 \begin{figure}
 \centering
 \includegraphics[scale=0.10]{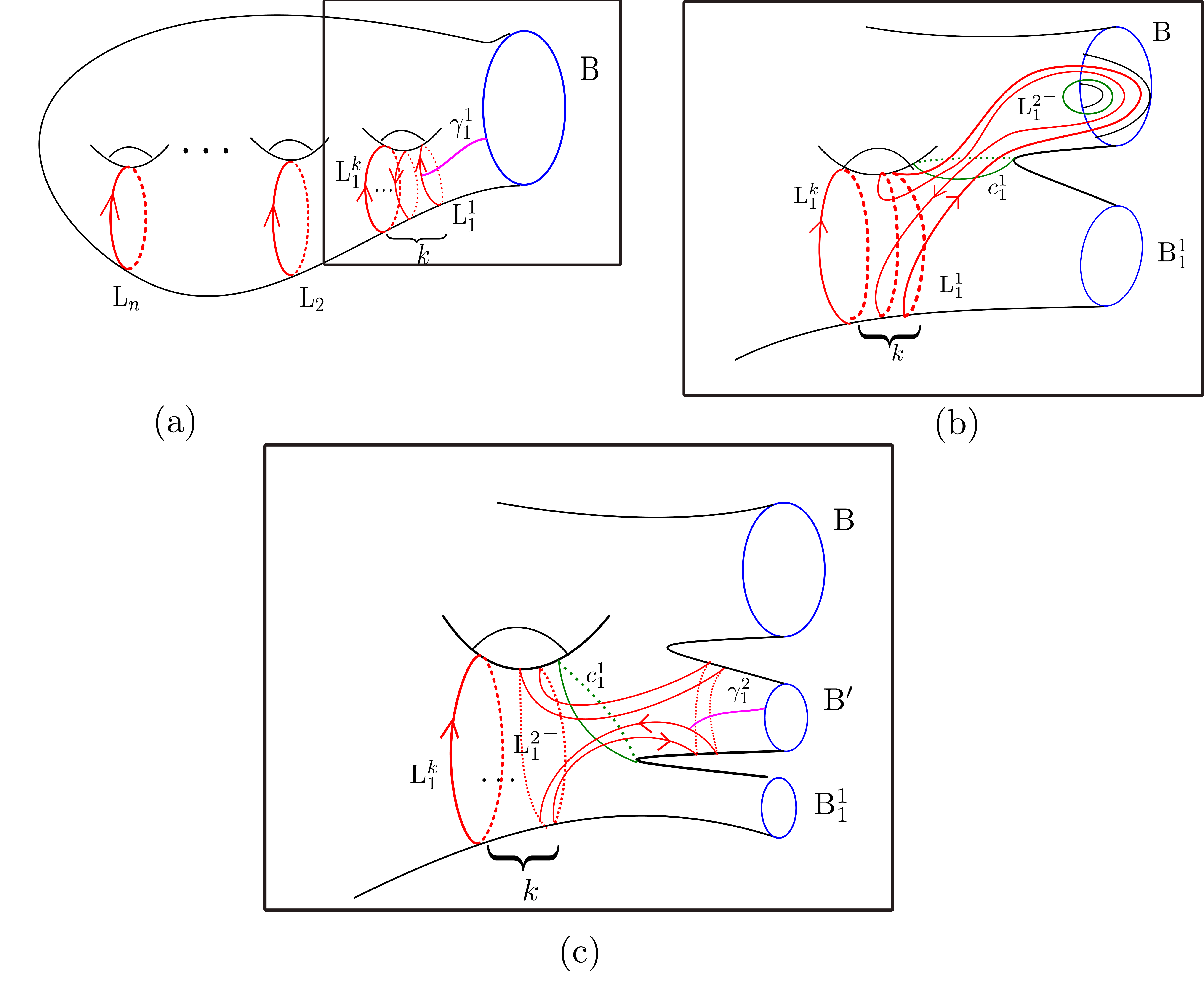}
 \caption{(a) Linearly dependent link components with $\Li_1^2$ having opposite orientation. (b)We stabilize $\B$ along a boundary parallel curve and move the components across the attaching 1-handle. This negatively stabilizes $\L_1^2$. (c) Find $\gamma_1^2$.  }
 \label{fig:genus_distinct_orientation}
 \end{figure}
 \subsubsection{Case 2(b)(i)}[If all the components are oriented similarly]
 
 We consider the local picture \fullref{fig:case2b}(b) where are link component has the same orientation.  Choose the component which is closest to $\B$, call it $\Li_i^1$ and find an arc $\gamma_i^1$ that stays to its right. Notice, this arc does not intersect any of the $\Li_i^j$'s where $j\neq 1$. Do the same procedure as before and find a new boundary component $\B_i^1$. We have the new open book $(\B\sqcup\B_i^1, \Sigma',\phi\circ D_{c_i^1})$. On this new open book, choose an arc $\gamma_i^2$ which can only possibly intersect $c_i^1$ (the closed curve we found using $\gamma_i^1$) and $\Li_i^1$. We iterate this process step by step. This will allow us to find a ordered set of simple closed curves $\{c_i^1, c_i^2,\dots, c_i^k\}$ where $c_i^k$ only possibly intersects $c_i^j$ and $\Li_i^j$ for $j=1,2,\dots,k-1$. Thus if we maintain the order and do the Dehn twist step by step that will not change the other link components we have not dealt with in the previous steps. We finally find a $k$ component link which is the transverse push-off of $\Li_i^1\cup \Li_i^2\cdots \Li_i^{k}$. We can do this process locally for all link components which are not linearly independent. Combining this with all the cases from \fullref{ssec:case2a} gives the desired result for every possible cases.
 
 \subsubsection{Case 2(b)(ii)}[If some of the link components have different orientation] To deal with this case, we follow the same procedure as in \fullref{sssec:differentorientation}. Check \fullref{fig:genus_distinct_orientation}.
 \begin{remark}
 Note that, in \fullref{ssec:case2b} we assumed the components to be meridional. The same idea also works for the other cases i.e if the linearly dependent components bound the genus or go between the genus.
 \end{remark}
 \end{proof}

Now we are ready to define the support genus of a transverse link. 
\begin{definition}
 The support genus $\sg(\T)$ of a transverse link $\T$ in a contact $3$-manifold $(\M,\xi)$ is the minimal genus of a page of the open book decomposition of $\M$ supporting $\xi$ such that $\T$ can be realized as a sub-binding of that open book.
\end{definition}
\begin{figure}[!htbp]
\centering
	\includegraphics[scale=0.15]{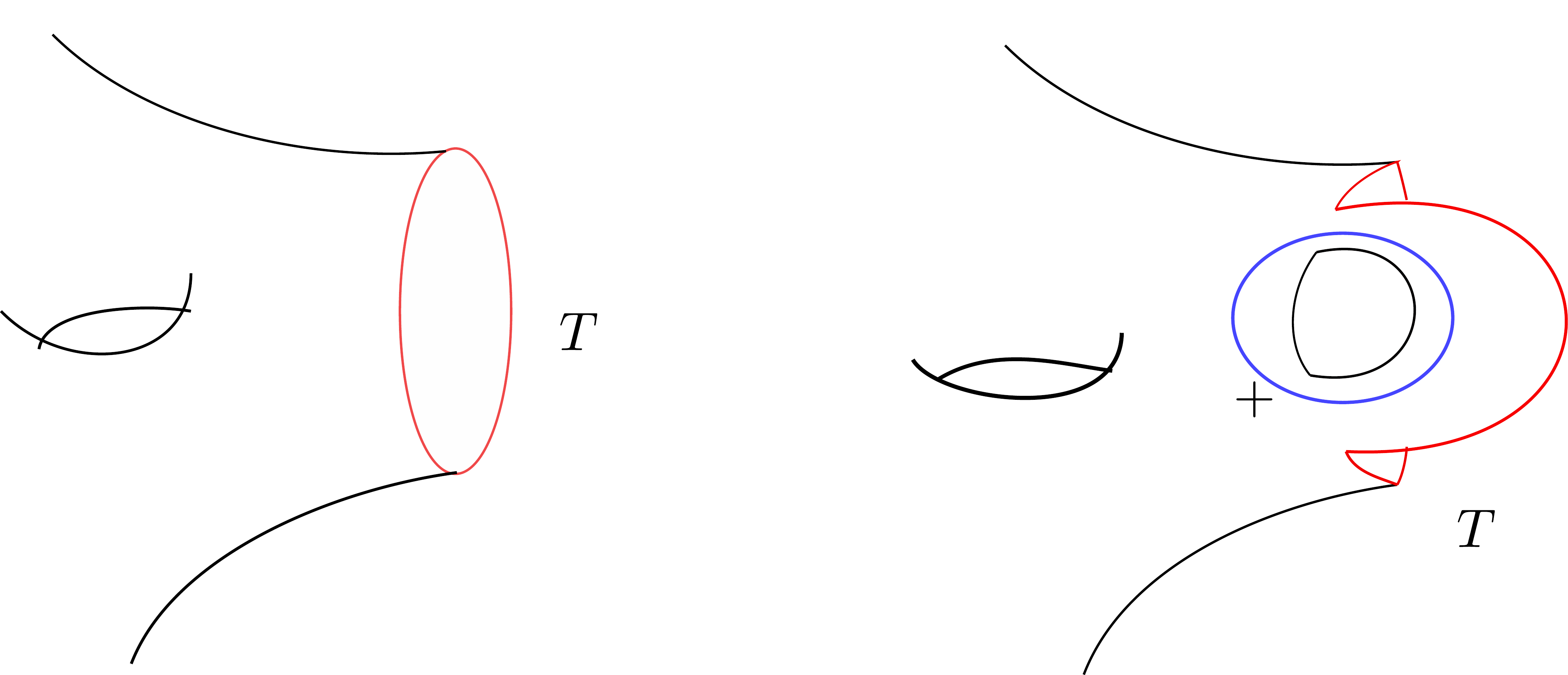}
	\caption{Positive stabilization of the open book along T.}
	
		\label{fig:stabilize}
\end{figure}

\begin{lemma}
	\label{lemma:genus}
Let $T$ be a transverse link in $(M,\xi)$ with $sg(T)=g$. Then its Legendrian approximations with their contact framing $ \leq$ page framing can be put on the page of a supporting open book of $(\M,\xi)$ without altering the genus.
\end{lemma}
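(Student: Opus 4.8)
The plan is to reverse, locally and in a genus-preserving way, the construction carried out in the proof of \fullref{thm:binding}.

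Since $\sg(\T)=g$, I would start by fixing a genus-$g$ open book $(\Sigma,\phi)$ supporting $(\M,\xi)$ in which $\T$ appears as a sub-binding, say $\T=\B_1\sqcup\dots\sqcup\B_n$ with $\B_i$ the image of a boundary circle $\partial_i$ of $\Sigma$. The goal is to produce from $(\Sigma,\phi)$ a supporting open book of the \emph{same} genus $g$ one of whose pages carries a Legendrian approximation of $\T$, and then to see that every Legendrian approximation of $\T$ with contact framing $\le$ page framing is accounted for.

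The main move is local, near one binding component at a time. Fix $i$ and positively stabilize $(\Sigma,\phi)$ by attaching a $1$-handle with \emph{both feet on the single boundary circle} $\partial_i$, composing the monodromy with a right-handed Dehn twist $\D_{c_i}$ along the core $c_i$ of the resulting positive Hopf band. Because the two feet lie on one boundary circle, this stabilization raises the number of binding components by one but leaves the genus equal to $g$ -- the same bookkeeping that makes all the stabilizations in \fullref{ssec:case1a} and \fullref{ssec:case2a} genus-preserving. Let $\Li_i$ be the Legendrian realization of $c_i$ on the new page, so that its contact framing equals its page framing. The key assertion -- which is \fullref{lemma:crucial} read from the vantage point of the binding component rather than of the knot on the page -- is that the positive transverse push-off of $\Li_i$ is transversely isotopic to $\B_i$. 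To prove this I would work in the standard model of a neighborhood of a binding component together with the identification of $\M$ before and after a positive stabilization: the old binding $\B_i$, transported through that identification, is exactly the transverse knot obtained by pushing $\Li_i$ off the page, and the right-handedness of $\D_{c_i}$ forces this to be the positive push-off, precisely as in \cite{bakeretnyre}. I expect this to be the main obstacle; the rest is bookkeeping.

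Performing this move once for each $i$, with the $1$-handles and the curves $c_i$ supported in pairwise disjoint neighborhoods of the $\partial_i$'s (so that the Dehn twists commute and the local pictures do not interfere, exactly as in \fullref{ssec:case1a}), I would obtain a genus-$g$ supporting open book with the Legendrian link $\Li=\Li_1\sqcup\dots\sqcup\Li_n$ on one of its pages, each $\Li_i$ with contact framing equal to page framing, and with $\T$ transversely isotopic to the transverse push-off of $\Li$; thus $\Li$ is a Legendrian approximation of $\T$ realized on a page of an open book of the same genus $g=\sg(\T)$. It remains to cover \emph{every} Legendrian approximation of $\T$ with contact framing $\le$ page framing: any such approximation differs from the maximal one only by negative stabilizations (a Legendrian approximation is well defined up to negative stabilization, and a negative stabilization leaves the transverse push-off unchanged, as recorded in \fullref{sec:basics}), so -- arranging if necessary that $\Li$ realizes the largest Thurston--Bennequin invariant compatible with $\mathrm{cf}\le\mathrm{pf}$ -- by the mechanism of \fullref{ssec:case1a}, where a further genus-preserving positive stabilization along a curve with both feet on one binding component replaces a Legendrian on the page by its negative stabilization, each of these negative stabilizations can again be carried on a page of a genus-$g$ supporting open book. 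This would complete the proof; the only genuinely non-formal input is the transverse push-off identity of the third paragraph.
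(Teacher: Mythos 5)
The overall shape of your argument (genus-preserving positive stabilizations at the binding, then iterated stabilize-and-push-over-the-handle moves to realize the further negative stabilizations with contact framing $\le$ page framing) matches the paper, and that last part is fine. But your central claim is where the gap lies. If you attach a $1$-handle with both feet on $\partial_i$ and twist along the core $c_i$ of the resulting positive Hopf band, then on the new page the old boundary circle splits into two binding components: one that follows most of $\partial_i$ and is the one transversely isotopic to $\B_i$ (this is what \cite{sv} guarantees and what the paper's Case 2 uses), and a small circle contained in a ball near the plumbing region, clasping the first. The Hopf-band core $c_i$ is page-parallel to that \emph{small} circle, so its Legendrian realization is a Legendrian unknot sitting in a ball, and its positive transverse push-off is a transverse unknot -- not $\B_i$, as soon as $\B_i$ is knotted (or even just not that particular unknot). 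So the ``key assertion'' is false as stated; and it is not \fullref{lemma:crucial} read backwards: that lemma starts from a Legendrian already on the page and produces a binding component equal to its push-off, whereas what you need is the reverse direction, producing a Legendrian on the page from a prescribed binding component.

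What the correct version of your step requires is a curve on the page parallel to the binding component that stays transversely isotopic to $\B_i$, Legendrian realized with contact framing equal to page framing, together with a proof that its \emph{positive} push-off is $\B_i$. That statement is exactly the nontrivial input the paper takes from Baldwin--Etnyre [Lemma 5.3, \cite{BE}]: a standard neighborhood of the binding whose boundary torus is foliated by Legendrians lying on pages, these being Legendrian approximations of the binding with contact framing equal to page framing. Note also that this needs at least two binding components (otherwise a boundary-parallel curve is null-homologous on the page and cannot be Legendrian realized), which is the only reason the paper performs a stabilization at all (its Case 2); when $|\partial\Sigma|\ge 2$ no stabilization is needed for the first approximation. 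So either quote the Baldwin--Etnyre neighborhood result, or actually carry out the model computation you gesture at -- but for the boundary-parallel copy of the correct binding component, not for the Hopf-band core.
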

\begin{proof}
	Let $(\Sigma_g,\phi)$ be an open book with genus $g$ such that $T$ is a sub-binding of $(\Sigma_g,\phi)$. 
	\subsubsection*{Case 1}
	Suppose that $|\partial\Sigma_g|\geq 2$. In this case, Baldwin and Etnyre showed [Lemma 5.3, \cite{BE}] that a tubular neighborhood of the binding is contactomorphic to the standard neighbourhood of a transverse knot $S_\epsilon$ for some $\epsilon\in (0,1)$. Thus there exists a neighbourhood of the binding whose boundary is foliated by Legendrians which sit of the page of the open book. Notice, in this case the contact framing is same as the page framing. This Legendrian is known as a Legendrian approximation of the binding. Thus we can realize a Legendrian approximation of $T$ on the page of the open book.
		\subsubsection*{Case 2} Suppose the open book has only one component $T$. Then we stabilize the open book positively  along $T$ as shown in \fullref{fig:stabilize}. Notice, this operation neither changes the genus of the open book nor the transverse isotopy class of the binding $T$ as shown in \cite{sv}. Also notice, $T$ is still a sub-binding of the new open book. Now we are in case 1.
	\begin{figure}[!htbp]
\centering
	\includegraphics[scale=0.20]{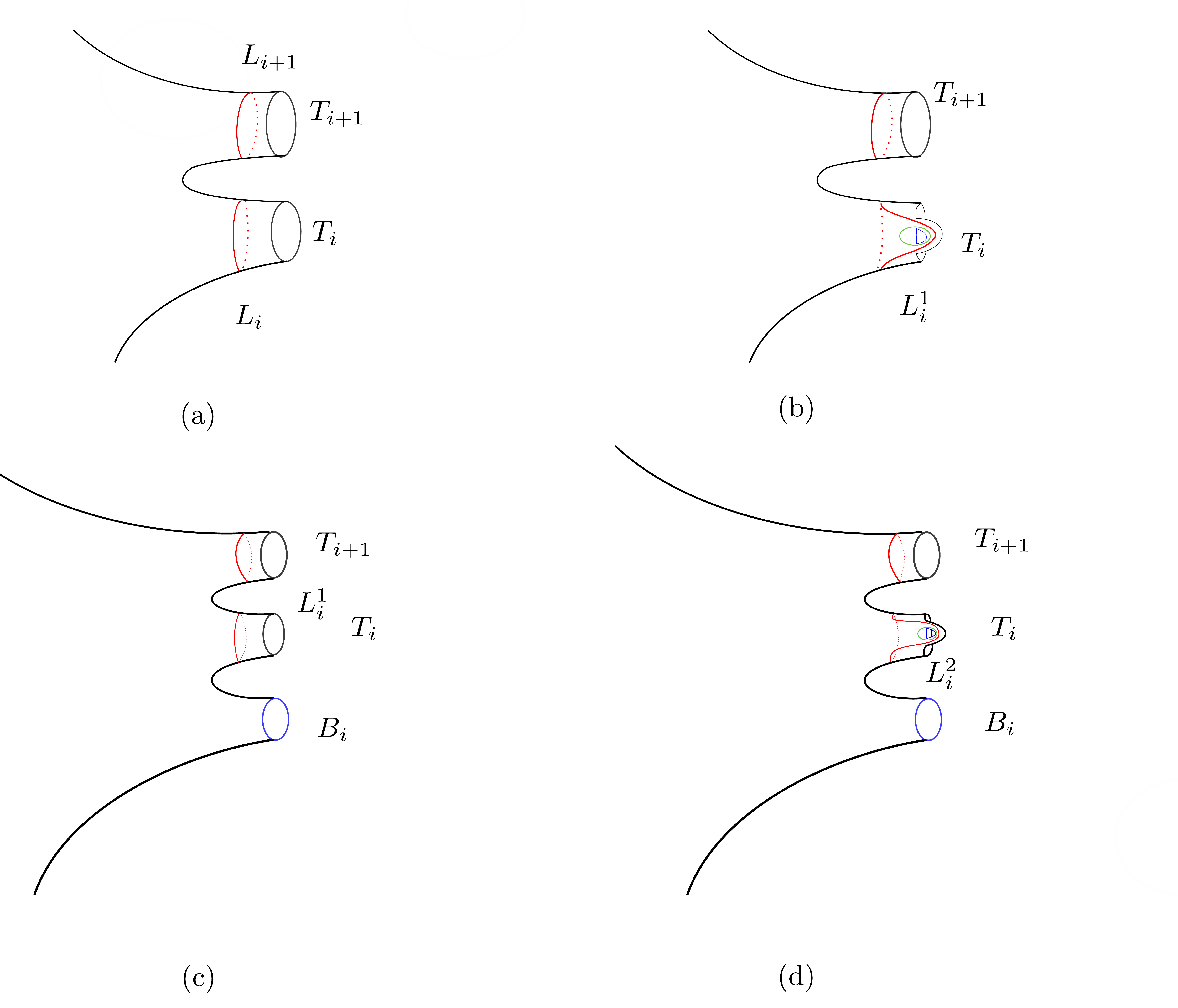}
	\caption{(a) Local picture of an open book with two boundary component.(b)Positively stabilize the open book along $T_i$.(c)Once negatively stabilized $L_i^1$ on the page.(d)Iterate the process and get the Legendrian approximation $L_i^2$. }
		\label{fig:iterated_stabilization}

\end{figure}

	 We can repeat this process along a binding component $T_i$ and push the component over the 1-handle as shown in \fullref{fig:iterated_stabilization}. This gives us the negative stabilization $L_i^1$ of the Legendrian component $L_i$ on the page of the open book \cite{etplanar}, \cite{ona}. So this decreases the contact framing with respect to the page framing by 1.  Repeating this procedure along $T_i$ we can realize all Legendrian approximations of $T$ such that the contact framing is less than or equal to the page framing. This completes the proof. 
	\end{proof}

\begin{remark}
	Notice this result will not generally hold for every Legendrian approximation of $T$. It might not be possible to increase the contact framing of a Legendrian link with respect to the page framing by keeping the genus fixed.
\end{remark}

\begin{theorem}
	\label{thm:invariant}
The support genus is an invariant of a transverse link.
\end{theorem}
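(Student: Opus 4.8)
The plan is to show that $\sg(\T)$ depends only on the transverse isotopy class of $\T$, which is exactly the assertion that it is an invariant. First I would note that \fullref{thm:binding} guarantees that the collection of open books over which the minimum is taken is non-empty, so that $\sg(\T)$ is a well-defined non-negative integer for every transverse link $\T$ (rather than being vacuously $+\infty$); it then remains only to check invariance under transverse isotopy.

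Next I would invoke the isotopy extension theorem for transverse submanifolds: a transverse isotopy from $\T$ to $\T'$ extends to an ambient contact isotopy $\psi_t$ of $(\M,\xi)$, so that each $\psi_t$ is a contactomorphism of $(\M,\xi)$ to itself and $\psi_1(\T)=\T'$. Given any open book $(\Sigma,\phi)$ supporting $(\M,\xi)$ with binding $\B$ in which $\T$ is a sub-binding, I would push it forward by $\psi_1$ to obtain the open book with binding $\psi_1(\B)$, pages $\psi_1(\Sigma)$, and monodromy $\psi_1\circ\phi\circ\psi_1^{-1}$. Since $\psi_1$ is a diffeomorphism it carries pages to pages and preserves their genus, and since $\psi_1$ is a contactomorphism we have $\psi_1^*\alpha=h\,\alpha$ for some positive function $h$, so the two defining conditions of a supporting open book ($d\alpha$ a positive area form on the pages and $\alpha$ positive on the binding) are transported to the pushed-forward open book with respect to the contact form $(\psi_1^{-1})^*\alpha$; hence the new open book still supports $\xi$. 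Because $\T$ was a sub-binding of $(\Sigma,\phi)$, its image $\psi_1(\T)=\T'$ is a sub-binding of an open book supporting $(\M,\xi)$ with the same page genus. This yields $\sg(\T')\le\sg(\T)$, and applying the identical argument to $\psi_1^{-1}$ gives the reverse inequality, so $\sg(\T')=\sg(\T)$.

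This argument is essentially formal, and I do not expect a substantive obstacle: the only points that need care are (i) the non-emptiness of the minimizing set, which is precisely \fullref{thm:binding}, and (ii) the verification that both the supporting-open-book conditions and the sub-binding property are preserved under a contactomorphism, which is routine. In this sense the theorem is best read as a consistency check confirming that \fullref{thm:binding} makes the preceding definition of $\sg(\T)$ legitimate.
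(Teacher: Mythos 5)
Your proposal is correct and follows essentially the same route as the paper: extend the transverse isotopy to an ambient contact isotopy via the isotopy extension theorem (Theorem 2.6.12 in Geiges) and use it to carry a supporting open book with $\T$ as sub-binding to one for $\T'$ of the same genus. Your write-up merely makes explicit the details the paper leaves implicit (well-definedness via \fullref{thm:binding}, the check that the pushed-forward open book still supports $\xi$, and the two-sided inequality).
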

\begin{proof}
Suppose  $T_1$ and $T_2$ are transverly isotopic links in $(M,\xi)$ with support genus $g_1$ and $g_2$. So there exists open books $(\Sigma_{g_1}, \phi_1), (\Sigma_{g_2},\phi_2)$ for $T_1$ and $T_2$.
As stated in \cite{geiges}, a transverse link can be considered as a contact submanifold. So, we can extend the transverse isotopy to an ambient contact isotopy by the isotopy extension theorem [Theorem  2.6.12, \cite{geiges}] which takes one open book to the other. Clearly, $g_1=g_2$.
\end{proof}
Our next theorem finds a relationship between the support genus of a transverse link and the support genus of its Legendrian approximation. One might hope that a transverse link will have a support genus greater than its Legendrian approximation, surprisingly that's not the case in this following special case.
\begin{theorem}
\label{thm:transversesg}
Suppose $\T$ is a transverse link in $(\M,\xi)$ and $\Li$ is its Legendrian approximation such it's contact framing $\leq$ page framing. Then $\sg(\T)=\sg(\Li)$.
\end{theorem}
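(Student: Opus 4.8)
The plan is to establish the equality $\sg(\T)=\sg(\Li)$ by proving the two inequalities separately, using \fullref{thm:binding} and \fullref{lemma:genus} as the two directions of a dictionary between sub-bindings and on-page Legendrians.

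\textbf{Step 1: $\sg(\Li)\leq\sg(\T)$.} Let $g=\sg(\T)$ and pick an open book $(\Sigma_g,\phi)$ supporting $(\M,\xi)$ in which $\T$ is realized as a sub-binding. By \fullref{lemma:genus}, every Legendrian approximation of $\T$ whose contact framing is at most the page framing can be placed on the page of a supporting open book \emph{without changing the genus} — concretely, by passing to Case 1 (possibly after the genus-preserving stabilization of Case 2) and then iteratively stabilizing along the relevant binding components to lower the contact framing one step at a time. In particular the specific approximation $\Li$ in the hypothesis — for which, by assumption, the contact framing is $\leq$ the page framing — sits on the page of an open book of genus $g$ with page framing equal to contact framing. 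Hence $\sg(\Li)\leq g=\sg(\T)$.

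\textbf{Step 2: $\sg(\T)\leq\sg(\Li)$.} Let $h=\sg(\Li)$ and choose an open book $(\Sigma_h,\psi)$ supporting $(\M,\xi)$ in which $\Li$ lies on a page with page framing equal to contact framing. Now run the construction in the proof of \fullref{thm:binding}: for each link component, one stabilizes the binding along a suitable closed curve $c_i$ (built from an arc $\gamma_i$ to the right of $\Li_i$) so that the resulting new binding component is the transverse push-off of a negative stabilization of $\Li_i$, hence transversely isotopic to $\T_i$ by well-definedness of the transverse push-off. Crucially, each such stabilization is a \emph{positive} stabilization of the open book, so it leaves the supported contact structure fixed, and it attaches a $1$-handle along an arc, which does not change the genus of the page — it only adds a boundary component. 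Thus after performing all these stabilizations we obtain an open book of genus $h$ supporting $(\M,\xi)$ in which $\T$ (up to transverse isotopy) is a sub-binding, giving $\sg(\T)\leq h=\sg(\Li)$. Combining the two steps yields $\sg(\T)=\sg(\Li)$.

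\textbf{Main obstacle.} The delicate point is Step 2: one must check that the open book manipulations in \fullref{thm:binding} genuinely preserve the page genus and do not secretly require raising it — in the various cases of that proof (parallel copies, mixed orientations, higher genus pages) one sometimes performs an extra boundary-parallel positive stabilization before sliding components over the $1$-handle, and one has to verify that \emph{all} of these are positive stabilizations and handle attachments that only add boundary components. A secondary subtlety is that \fullref{lemma:genus} (Step 1) only controls Legendrian approximations with contact framing $\leq$ page framing, which is exactly why the hypothesis on $\Li$ is needed and why the statement is restricted to this case; I would make sure the proof explicitly invokes that hypothesis rather than claiming it for an arbitrary Legendrian approximation.
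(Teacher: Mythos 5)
Your proposal is correct and follows essentially the same route as the paper: one inequality via \fullref{lemma:genus} (placing the Legendrian approximation on a genus-$\sg(\T)$ page, which is where the framing hypothesis enters) and the other by running the algorithm of \fullref{thm:binding} on a minimal-genus open book for $\Li$, noting the stabilizations preserve genus and that the resulting sub-binding is transversely isotopic to $\T$ by well-definedness of the push-off. The only cosmetic difference is that the paper cites \fullref{thm:invariant} explicitly where you use transverse isotopy invariance implicitly.
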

\begin{proof}
We start with a transverse link $\T$ in $(\M,\xi)$ and let $\sg(\T)=g$. So we can realize it as a sub-binding of some open book $(\Sigma,\phi)$ with minimum genus $g$. Now take a Legendrian approximation of each of the components such that the components lie on the page of the open book. This can be done without changing the genus as shown in \fullref{lemma:genus}. This will give us a Legendrian link $L$ sitting on an open book with genus $g$. Thus $\sg(\Li)\leq g$. Now take this Legendrian link and put it on an open book with  genus=$\sg(\Li)$. Now apply the algorithm we used in \fullref{thm:binding} to find a transverse push off $\T'$ of $L$ which is also a sub-binding of the underlying open book. Thus $\sg(\T')\leq\sg(\Li)$. By the well-definedness of transverse push-off $\T'$ must be transversely isotopic to $\T$. As the support genus is an invariant of transverse links by \fullref{thm:invariant}, we must have $\sg(\T')=g$. Thus $\sg(\T)=\sg(\Li)$. As by \fullref{lemma:genus} we can get all Legendrian approximations of $T$ with contact framing $\leq$ page framing without altering the genus, the proof follows.
\end{proof}

The following theorem was proved in \cite{ona}.
\begin{theorem}[\cite{ona}]
	\label{thm:ona}
Suppose $\Li$ is a loose, null-homologous Legendrian knot in $(\M,\xi)$. Then $\sg(\Li)=0$.
\end{theorem}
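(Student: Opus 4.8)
The plan is to put $\Li$ directly onto a genus-zero page, exploiting that looseness forces $\xi$ to be overtwisted and hence planar.

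First I would observe that if $\Li$ is loose then $(\M,\xi)$ is overtwisted (an overtwisted disk in the complement of $\Li$ is in particular an overtwisted disk in $\M$), and then invoke the fact that every overtwisted contact structure on a closed oriented $3$-manifold is supported by a planar open book — equivalently, has support genus zero. Let $(\Sigma_0,\phi_0)$ be such an open book for $(\M,\xi)$, with $\Sigma_0$ of genus $0$.

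Next I would place $\Li$ on a page of (a stabilization of) $(\Sigma_0,\phi_0)$. By the standard procedure — Corollary~4.23 of \cite{etnyrelectures} — after positively stabilizing $(\Sigma_0,\phi_0)$ finitely many times one may isotope $\Li$ so that it lies on a page with the page framing equal to the contact framing. The point requiring attention is that these positive stabilizations can be chosen to keep the page of genus $0$: a positive stabilization amounts to plumbing on a positive Hopf band along a properly embedded arc of the current page, and plumbing along an arc whose two endpoints lie on the \emph{same} boundary component leaves the genus unchanged while creating an extra boundary circle (plumbing along an arc joining two distinct boundary components would instead raise the genus). Since one is free to route the relevant arcs — after, if necessary, a few preliminary genus-preserving stabilizations to manufacture boundary components where they are needed — the whole procedure can be carried out within the planar world. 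The result is $\Li$ sitting on a genus-zero page of an open book still supporting $(\M,\xi)$, with page framing equal to contact framing; hence $\sg(\Li)=0$.

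The step I expect to be the main obstacle is the input that overtwisted contact structures have support genus zero: this is the real geometric content of the statement, and it is exactly where looseness is used — a non-loose Legendrian knot sits in a contact manifold with no reason to admit a planar supporting open book, so one cannot hope for $\sg=0$ in general. Verifying the second point above, that the stabilizations realizing $\Li$ on a page may all be taken genus-preserving, is less serious but still requires care; it is bookkeeping of the same flavour as the arc-chasing in the proof of \fullref{thm:binding}. (An alternative route, avoiding the explicit handling of Corollary~4.23, would first use Etnyre's coarse classification of loose null-homologous Legendrian knots \cite{et} to replace $\Li$ by any other loose Legendrian knot of the same topological type and classical invariants, and then exhibit such a knot on a genus-zero page by the surgery technique of \fullref{lemma:genus}; the overtwisted-implies-planar input is still needed.)
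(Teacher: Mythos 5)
This statement is quoted in the paper from Onaran's work \cite{ona} and is not reproved there, so the comparison is with Onaran's argument: one uses Etnyre's coarse classification of loose null-homologous Legendrian knots \cite{et} to reduce to exhibiting, for each knot type and each pair of classical invariants, \emph{one} loose representative lying on the page of a genus-zero open book supporting the given overtwisted structure (Eliashberg's classification identifies the contact structure), and then uses the fact that the support genus is a contactomorphism invariant. Your parenthetical ``alternative route'' is essentially this proof; your main route is different, and it has a genuine gap.

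The gap is the step where you claim that the stabilizations needed to realize $\Li$ on a page (Corollary~4.23 of \cite{etnyrelectures}) ``can be chosen to keep the page of genus $0$'' because ``one is free to route the relevant arcs.'' That is precisely the nontrivial content of the theorem, and nothing in the standard realization procedure gives such control: the usual argument builds or stabilizes an open book from a ribbon of a Legendrian skeleton containing $\Li$, and the arcs along which Hopf bands are plumbed may be forced to join distinct boundary components (raising the genus) or to follow $\Li$ around handles; no amount of ``preliminary genus-preserving stabilizations'' is shown to fix this. A telling symptom is that your argument uses looseness only to conclude that $\xi$ is overtwisted, hence supported by a planar open book; if the remaining steps worked as stated, they would prove that \emph{every} null-homologous Legendrian knot in an overtwisted contact manifold, non-loose ones included, has support genus zero. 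That is far stronger than the theorem, is not known, and would in particular render vacuous the question raised in the introduction of this paper about non-loose knots with $\sg>0$. Looseness has to enter the realization step itself --- via the coarse classification, which lets you discard the given embedding of $\Li$ and work with a conveniently constructed planar representative --- not merely via planarity of $\xi$. So either carry out your parenthetical route in full (construct explicit loose page curves on planar open books realizing all values of $\tb$ and $\rot$ in the given knot type, then invoke \cite{et} and invariance of $\sg$ under contactomorphism), or supply an actual proof of the genus-preserving routing claim; as written, the main argument does not establish the theorem.
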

As a corollary to the above theorem, we show the following:
\begin{corollary}
\label{cor:loosetransverse}
Suppose $\T$ is a loose, null-homologous transverse knot in $(\M,\xi)$. Then $\sg(\T)=0$.
\end{corollary}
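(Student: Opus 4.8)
The plan is to deduce the corollary immediately from \fullref{thm:transversesg} together with Onaran's result \fullref{thm:ona}. Concretely: a Legendrian approximation of a loose, null-homologous transverse knot is again loose and null-homologous, hence has vanishing support genus by \fullref{thm:ona}; and since the transverse and Legendrian support genera coincide under the framing hypothesis of \fullref{thm:transversesg}, this forces $\sg(\T)=0$.

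First I would note that $\sg(\T)$ is defined at all, since $\T$ is a sub-binding of some supporting open book by \fullref{thm:binding}. Now fix a Legendrian approximation $\Li_0$ of $\T$. Since $\Li_0$ is smoothly isotopic to $\T$, it is null-homologous and its classical invariants make sense. Since a Legendrian approximation may be taken inside an arbitrarily small tubular neighborhood $\nu(\T)$ of $\T$, and since $\T$ loose means there is an overtwisted disk $\D \subset \M \setminus \T$, shrinking $\nu(\T)$ so that it is disjoint from $\D$ shows that $\D \subset \M \setminus \Li_0$ as well; hence $\Li_0$ is loose. A Legendrian approximation is only well defined up to negative stabilization, and each negative stabilization lowers $\tb$ by one while enlarging the complement of the knot (so it preserves looseness) and fixing the transverse push-off; after finitely many negative stabilizations we obtain a loose, null-homologous Legendrian approximation $\Li$ of $\T$ whose contact framing is at most the page framing when placed on a supporting open book of $(\M,\xi)$ --- this is exactly the hypothesis under which \fullref{thm:transversesg} (via \fullref{lemma:genus}) applies, and by the remark following \fullref{lemma:genus} it is precisely the heavily negatively stabilized approximations for which this can be arranged without increasing the genus.

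By \fullref{thm:ona}, $\sg(\Li)=0$. Applying \fullref{thm:transversesg} to $\T$ and this $\Li$ gives $\sg(\T)=\sg(\Li)=0$, which completes the argument. The only step requiring genuine (if routine) verification is that looseness descends from $\T$ to $\Li$ and survives the negative stabilizations needed to meet the framing hypothesis; this is immediate since looseness is a condition on the complement and every operation used only enlarges that complement, so I do not expect a serious obstacle here.
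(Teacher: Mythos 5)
Your proof is correct and takes essentially the same route as the paper: deduce the corollary from \fullref{thm:ona} together with the equality $\sg(\T)=\sg(\Li)$ of \fullref{thm:transversesg}, applied to a loose, sufficiently negatively stabilized Legendrian approximation. The paper's written proof differs only cosmetically---it argues by contradiction and re-runs the algorithm of \fullref{thm:binding} rather than quoting \fullref{thm:transversesg} as a black box, and it handles a possibly non-loose approximation by stabilizing until it becomes loose, whereas you secure looseness at the outset by taking the approximation (and its later stabilizations) inside a small neighborhood of $\T$ disjoint from an overtwisted disk, which in fact also justifies the paper's stabilization step.
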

\begin{proof}
Suppose $\T$ is a loose, null-homologous transverse knot in $(\M,\xi)$ with $\sg(T)>0$. So we find an open book $(\Sigma_g,\phi)$ such that $T$ can be realized as a sub-binding of the open book with minimum genus g $\neq 0$. By \fullref{lemma:genus}, we can get some Legendrian approximation $L$ of $T$ as a leaf of the characteristic foliation of $\partial S_\epsilon$ such that the contact framing agrees with the page framing. Note that a Legendrian approximation of a loose transverse knot can be loose or non-loose \cite{et}. If $L$ is non-loose, then we can negatively stabilize $L$ enough times so that it becomes loose. Notice that, we can negatively stabilize a Legendrian knot on the page of an open book such that the contact framing agrees with the page framing and keeping the genus fixed as shown in \fullref{lemma:genus}. By abuse of notation, we call the negatively stabilized knot $L$ too. Now applying the algorithm of \fullref{thm:binding}, we can realize the transverse push-off of $L$, say $T'$ as a sub-binding of the planar open book. But as support genus is an invariant of the transverse isotopy class, $sg(T)=\sg(T')$ and we reached a contradiction. Thus $\sg(T)=0$.
\end{proof}

 \bibliographystyle{mwamsalphack}
\bibliography{references1}
\end{document}